\def\myfontsize{10pt}
\def\newthm#1#2{\newtheorem{#1}[dummy]{#2}%
  \expandafter\def\csname#2\endcsname##1{\hyperref[#1:##1]{#2~\ref*{#1:##1}}}}
\newtheorem*{mainthm}{Theorem}
\theoremstyle{definition}
\newcommand{\Section}[1]{\hyperref[sec:#1]{Section~\ref*{sec:#1}}}
\newcommand{\Table}[1]{\hyperref[tab:#1]{Table~\ref*{tab:#1}}}
\newcommand{\Figure}[1]{\hyperref[fig:#1]{Figure~\ref*{fig:#1}}}
\newcommand{\eqn}[1]{\hyperref[eqn:#1]{(\ref*{eqn:#1})}}
\DeclareMathOperator{\GL}{GL}
\DeclareMathOperator{\SL}{SL}
\DeclareMathOperator{\Gr}{Gr}
\DeclareMathOperator{\Fl}{Fl}
\DeclareMathOperator{\LG}{LG}
\DeclareMathOperator{\SG}{SG}
\DeclareMathOperator{\OG}{OG}
\DeclareMathOperator{\Sym}{Sym}
\DeclareMathOperator{\Span}{Span}
\DeclareMathOperator{\Spec}{Spec}
\DeclareMathOperator{\QH}{QH}
\DeclareMathOperator{\QK}{QK}
\DeclareMathOperator{\codim}{codim}
\DeclareMathOperator{\Pic}{Pic}
\DeclareMathOperator{\dist}{dist}
\newcommand{\comin}{\mathrm{comin}}
\newcommand{\ssm}{\smallsetminus}
\newcommand{\bG}{{\mathbb G}}
\newcommand{\bA}{{\mathbb A}}
\newcommand{\bP}{{\mathbb P}}
\newcommand{\bK}{{\mathbb K}}
\newcommand{\C}{{\mathbb C}}
\newcommand{\Q}{{\mathbb Q}}
\newcommand{\Z}{{\mathbb Z}}
\newcommand{\N}{{\mathbb N}}
\def\cL{{\mathcal L}}
\newcommand{\cO}{{\mathcal O}}
\newcommand{\fm}{{\mathfrak m}}
\newcommand{\ds}{\displaystyle}
\newcommand{\pt}{\mathrm{point}}
\newcommand{\al}{{\alpha}}
\newcommand{\be}{{\beta}}
\newcommand{\ga}{{\gamma}}
\newcommand{\ka}{{\kappa}}
\newcommand{\la}{{\lambda}}
\newcommand{\om}{{\omega}}
\newcommand{\ev}{\operatorname{ev}}
\newcommand{\wb}{\overline}
\newcommand{\ov}{\overline}
\newcommand{\ignore}[1]{}
\newcommand{\Mb}{\wb{\mathcal M}}
\newcommand{\noin}{\noindent}
\begin{document}

\title{Equivariant rigidity of Richardson varieties}

\date{April 24, 2025}

\author{Anders~S.~Buch}
\address{Department of Mathematics, Rutgers University, 110
  Frelinghuysen Road, Piscataway, NJ 08854, USA}
\email{asbuch@math.rutgers.edu}

\author{Pierre--Emmanuel Chaput}
\address{Domaine Scientifique Victor Grignard, 239, Boulevard des
  Aiguillettes, Universit{\'e} de Lorraine, B.P.  70239,
  F-54506 Vandoeuvre-l{\`e}s-Nancy Cedex, France}
\email{pierre-emmanuel.chaput@univ-lorraine.fr}

\author{Nicolas Perrin}
\address{Centre de Math\'ematiques Laurent Schwartz (CMLS), CNRS, \'Ecole
polytechnique, Institut Polytechnique de Paris, 91120 Palaiseau, France}
\email{nicolas.perrin.cmls@polytechnique.edu}

\subjclass[2020]{Primary 14M15; Secondary 14C25, 14L30, 14N35, 14N15, 19E08}

\keywords{Rigidity, Schubert varieties, equivariant cohomology,
Bialynicki-Birula decomposition, curve neighborhoods, Seidel representation,
quantum $K$-theory, horospherical varieties}

\thanks{Buch was partially supported by NSF grant DMS-2152316. Perrin was
partially supported by ANR project FanoHK, grant ANR-20-CE40-0023.}

\begin{abstract}
  We prove that Schubert and Richardson varieties in flag manifolds are uniquely
  determined by their equivariant cohomology classes, as well as a stronger
  result that replaces Schubert varieties with closures of Bialynicki-Birula
  cells under suitable conditions. This is used to prove a conjecture from
  \cite{buch.chaput.ea:seidel}, stating that any two-pointed curve neighborhood
  representing a quantum cohomology product with a Seidel class is a Schubert
  variety. We pose a stronger conjecture which implies a Seidel multiplication
  formula in equivariant quantum $K$-theory, and prove this conjecture for
  cominuscule flag varieties.
\end{abstract}

\maketitle

% !TeX root=equivrigid.tex

\section{Introduction}

A Schubert variety $\Omega$ in a flag manifold $X = G/P$ is called \emph{rigid}
if it is uniquely determined by its class $[\Omega]$ in the cohomology ring
$H^*(X)$. More precisely, if $Z \subset X$ is any irreducible closed subvariety
such that $[Z]$ is a multiple of $[\Omega]$ in $H^*(X)$, then $Z$ is a
$G$-translate of $\Omega$. This problem has been studied in numerous papers, see
e.g.\ \cite{hong:rigidity, hong:rigidity*1, coskun:rigid, robles.the:rigid,
coskun.robles:flexibility, coskun:rigidity, coskun:restriction*1,
hong.mok:schur, liu.sheshmani.ea:multi-rigidity} and the references therein. In
this paper we show that all Schubert varieties and Richardson varieties are
\emph{equivariantly rigid}. In other words, if $T \subset G$ is a maximal torus,
$\Omega \subset X$ is a $T$-stable Richardson variety, and $Z \subset X$ is a
(non-empty) $T$-stable closed subvariety such that the $T$-equivariant class
$[Z] \in H^*_T(X)$ is a multiple of $[\Omega]$, then $Z = \Omega$.

More generally, let $T$ be an algebraic torus over an algebraically closed
field, let $X$ be a non-singular projective $T$-variety, and let $\Omega \subset
X$ be a $T$-stable closed subvariety. Let $\Omega^T$ denote the set of $T$-fixed
points in $\Omega$. We will say that $\Omega$ is \emph{$T$-convex} if, for any
$T$-stable closed subvariety $Z \subset X$ satisfying $Z^T \subset \Omega$, we
have $Z \subset \Omega$. A fixed point $p \in X^T$ is called \emph{fully
definite} if all $T$-weights of the Zariski tangent space $T_p X$ belong to a
strict half-space of the character lattice of $T$. We show that if all $T$-fixed
points in $X$ are fully definite, then any irreducible $T$-convex subvariety of
$X$ is also $T$-equivariantly rigid. Here $\Omega$ is called $T$-equivariantly
rigid if $\Omega$ is determined by its class in the $T$-equivariant Chow
cohomology ring of $X$.

Let $\bG_m \subset T$ be a 1-parameter subgroup such that $X^T = X^{\bG_m}$, and
assume that this fixed point set is finite. The associated Bialynicki-Birula
decomposition of $X$ is given by $X = \bigcup_{p \in X^T} X_p^+$, where $X_p^+ =
\{ x \in X \mid \lim_{t \to 0} t.x = p \}$ is the Bialynicki-Birula cell of
points attracted to $p$ by the action of $t \in \bG_m$. This decomposition is
called a \emph{stratification} if each cell closure $\ov{X_p^+} \subset X$ is a
union of smaller cells. In this case we show that all cell closures are
$T$-convex. We arrive at the following result combining \Theorem{rigid} and
\Proposition{bb-fpi}.

\begin{mainthm}
  Let $X$ be a non-singular projective $T$-variety with finitely many $T$-fixed
  points, and choose $\bG_m \subset T$ such that $X^T = X^{\bG_m}$.\smallskip

  \noin{\rm(a)}\, Assume that the Bialynicki-Birula decomposition of $X$ is a
  stratification. Then each cell closure $\ov{X_p^+}$ is $T$-convex.\smallskip

  \noin{\rm(b)}\, Assume that all $T$-fixed points in $X$ are fully definite.
  Then any irreducible $T$-convex subvariety of $X$ is $T$-equivariantly rigid.
\end{mainthm}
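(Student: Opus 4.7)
For part (a), I would argue directly: let $Z \subset X$ be a $T$-stable closed subvariety satisfying $Z^T \subset (\ov{X_p^+})^T$. Given any $z \in Z$, the limit $q = \lim_{t \to 0} t \cdot z$ along the chosen $\bG_m \subset T$ exists since $X$ is projective, is a $T$-fixed point, and belongs to $Z$ because $Z$ is $T$-stable and closed. Hence $q \in Z^T \subset (\ov{X_p^+})^T$. By the stratification hypothesis, $\ov{X_p^+}$ is the disjoint union of the cells $X_r^+$ indexed by $r \in (\ov{X_p^+})^T$, so $X_q^+ \subset \ov{X_p^+}$; since $z \in X_q^+$, this gives $z \in \ov{X_p^+}$ and therefore $Z \subset \ov{X_p^+}$.

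For part (b), the plan is to combine equivariant localization with the following key lemma: for any $T$-stable closed subvariety $V \subset X$ and any fully definite fixed point $p$, the restriction $[V]|_p \in \Sym \hat T$ is nonzero precisely when $p \in V$. Granting the lemma, if $[Z]$ is a nonzero scalar multiple $c \cdot [\Omega]$ of $[\Omega]$ in the equivariant Chow cohomology ring (the scalar being forced by the matching of codimensions), localization yields $[Z]|_p = c \cdot [\Omega]|_p$ at every fixed point, so the fixed-point sets $Z^T$ and $\Omega^T$ both coincide with the common nonvanishing locus, giving $Z^T = \Omega^T \subset \Omega$. The $T$-convexity of $\Omega$ then upgrades this containment to $Z \subset \Omega$, and irreducibility with $\dim Z = \dim \Omega$ forces $Z = \Omega$.

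The bulk of the work lies in the key lemma. One direction is easy: if $p \notin V$ then the $T$-stable open complement $U = X \ssm V$ contains $p$ and the restriction of $[V]$ to $U$ vanishes, so $[V]|_p = 0$. For the converse, when $p \in V$ is fully definite, I would choose a cocharacter $\lambda_p : \bG_m \to T$ pairing strictly positively with every $T$-weight of the tangent space $T_p X$. The associated Bialynicki-Birula cell $W_p = \{x \in X : \lim_{t \to 0} \lambda_p(t) \cdot x = p\}$ then has dimension $\dim X$, so it is a $T$-stable open neighborhood of $p$, and by the Bialynicki-Birula theorem it is $T$-equivariantly isomorphic to the linear representation $T_p X$. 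The restriction of $[V]$ to $W_p$ is the equivariant class of the nonempty $T$-stable closed subvariety $V \cap W_p$ inside a $T$-module whose weights lie in a strict half-space of $\hat T_\R$.

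The step I expect to demand the most care is the assertion that the equivariant multidegree of such a nonempty closed subscheme of a one-sided $T$-module is a nonzero polynomial in $\Sym \hat T$. I would establish this by a Gr\"obner degeneration along $\lambda_p$, which replaces $V \cap W_p$ by a nonempty $T$-stable subscheme supported at the origin whose class is a positive integer combination of products of the positive weights; since all exponents appearing in the resulting monomial expansion lie in a strict half-space of $\hat T_\R$, no cancellations occur and the multidegree is nonzero. This step might alternatively be handled through the general positivity theory of equivariant multidegrees for torus actions with weights in an open half-space, but in any case it is the technical heart of part (b).
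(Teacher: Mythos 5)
Your proof of part~(a) is essentially the paper's argument: the paper isolates the observation that any $\bG_m$-stable closed $Z$ lies in $\bigcup_{q\in Z^{\bG_m}}X_q^+$ (each $z\in Z$ limits to a fixed point of $Z$), and convexity of $\ov{X_p^+}$ under the stratification hypothesis is an immediate corollary, exactly as you wrote.

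Part~(b) has the same skeleton as the paper's Theorem~4.3 --- the key lemma that $[V]_p\neq 0$ iff $p\in V$ at a fully definite fixed point, from which $Z^T=\Omega^T$, then convexity and equality of dimensions finish --- but your proof of the nonvanishing direction of the lemma takes a genuinely different technical route. The paper (Prop.~3.2, Cor.~3.3) works purely locally: it writes $[Z]_p=c_m(T_pX/T_pZ)\cdot\eta_pZ$ where $\eta_pZ=[C_pZ]$ is the class of the tangent cone, chooses $\bG_m\subset T$ acting with positive weights $a_1,\dots,a_n$ on $T_pZ$, maps $T_pZ$ by a finite $\bG_m$-equivariant morphism to a module with uniform weight $A=\prod a_i$, and then appeals to $H^*_{\bG_m}(U\ssm\{0\})\otimes\Q\cong H^*(\bP U)\otimes\Q$ and nonvanishing of Chow classes of subvarieties of projective space. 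You instead restrict $[V]$ to the big Bialynicki-Birula cell $W_p\cong T_pX$ (a $T$-equivariant linearization available precisely because $p$ is attractive) and invoke positivity of equivariant multidegrees for $T$-modules whose weights lie in an open half-space. Both are correct proofs of the same nonvanishing; the paper's avoids needing the equivariant linearization of an open neighborhood (and applies whenever $p$ is merely fully definite in $Z$ and non-degenerate in $X$), while yours reduces to a standard multidegree-positivity statement that is perhaps better known. Two small points you should tighten: the Gr\"obner degeneration along $\lambda_p$ replaces $V\cap W_p$ by a $\lambda_p$-cone, not a scheme supported at the origin, and to express its multidegree as a nonnegative combination of products of weights you should degenerate further to a monomial ideal via a generic term order; and the $T$-equivariant (not just $\bG_m$-equivariant) isomorphism $W_p\cong T_pX$ deserves a sentence --- it holds because $\bK[W_p]$ is nonnegatively $\lambda_p$-graded with degree-zero piece $\bK$, so a $T$-equivariant lift of a weight basis of $\fm_p/\fm_p^2$ yields a $T$-equivariant surjection $\Sym(T_pX^\vee)\twoheadrightarrow\bK[W_p]$ between domains of the same dimension.
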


This result applies to Schubert and Richardson varieties in flag varieties, as
well as positroid varieties in Grassmannians, so these subvarieties are both
$T$-convex and $T$-equivariantly rigid. However, projected Richardson varieties
do not in general enjoy these properties, see \Remark{projrich}. Our theorem
also covers a class of horospherical varieties, which includes all non-singular
horospherical varieties of Picard rank 1 \cite{pasquier:some}.

Our theorem has additional applications in quantum Schubert calculus. Let $X =
G/P$ be a complex flag manifold. A Schubert class $[X^w]$ is called a
\emph{Seidel class} if the Weyl group element $w$ is the minimal representative
of a point in some cominuscule flag variety $G/Q$. Multiplication by Seidel
classes in the quantum cohomology ring $\QH(X)$ is given by the identity $[X^w]
\star [X^u] = q^{d(w,u)} [X^{w u}]$, where $d(w,u)$ is the unique minimal degree
of a rational curve connecting the opposite Schubert varieties $X_{w_0 w}$ and
$X^u$ \cite{seidel:1, belkale:transformation, chaput.manivel.ea:affine}. This
implies that $[X^{w u}]$ is equal to the class of the curve neighborhood
$\Gamma_{d(w,u)}(X_{w_0 w}, X^u)$, defined as the union of all stable curves in
$X$ of degree $d(w,u)$ connecting $X_{w_0 w}$ to $X^u$. We conjectured in
\cite{buch.chaput.ea:seidel} that this curve neighborhood is in fact the
translated Schubert variety
\begin{equation}\label{eqn:intro_conj}%
  \Gamma_{d(w,u)}(X_{w_0 w}, X^u) \,=\, w^{-1}.X^{w u} \,.
\end{equation}
This has been proved in some cases when $X$ is cominuscule, in all cases when
$X$ is a flag variety of type A \cite{li.liu.ea:seidel, tarigradschi:curve}, and
for $X=\SG(2,2n)$ \cite{benedetti.perrin.ea:quantum}. Using that
$\Gamma_{d(w,u)}(X_{w_0 w},X^u)$ and $w^{-1}.X^{wu}$ define the same class in
$H^*_T(X)$ by an equivariant version of the Seidel multiplication formula from
\cite{chaput.manivel.ea:affine, chaput.perrin:affine}, the identity
\eqn{intro_conj} follows from our result that Schubert varieties are
equivariantly rigid.

In this paper we conjecture the more general identity
\begin{equation}\label{eqn:intro_extend}%
  \Gamma_{d(w,u)+e}(X_{w_0 w}, X^u) \,=\, \Gamma_e(w^{-1}(X^{w u})) \,,
\end{equation}
where the right hand side is the union of all stable curves of degree $e$ that
pass through $w^{-1}.X^{w u}$. This union is a Schubert variety
\cite{buch.chaput.ea:finiteness} whose Weyl group element was determined in
\cite{buch.mihalcea:curve}. Let $M_{d(w,u)+e}(X_{w_0 w},X^u)$ denote the moduli
space of 3-pointed stable maps to $X$ of degree $d(w,u)+e$ and genus zero, which
send the first two marked points to $X_{w_0 w}$ and $X^u$, respectively. We
further conjecture that the evaluation map $\ev_3 : M_{d(w,u)+e}(X_{w_0 w},X^u)
\to \Gamma_e(w^{-1}(X^{w u}))$ is cohomologically trivial. This conjecture
implies a Seidel multiplication formula in the equivariant quantum $K$-theory
ring $\QK_T(X)$. We prove this conjecture when $X$ is a cominuscule flag
variety, thereby obtaining an equivariant generalization of our Seidel
multiplication formula from \cite{buch.chaput.ea:seidel}. Based on suggestions
from Mihail Tarigradschi, we finally apply the methods of
\cite{tarigradschi:curve} to prove the identity \eqn{intro_extend} when $X =
\GL_n(\C)/P$ is any flag manifold of Lie type A.

Our paper is organized as follows. In \Section{actions} we recall some basic
facts and notation related to torus actions. In \Section{local} we show that if
all $T$-fixed points of $X$ are fully definite, then the fixed point set $Z^T$
of a $T$-stable subvariety $Z \subset X$ is determined by its equivariant class
$[Z] \in H^*_T(X)$. This is used in \Section{rigidity} to prove part (b) of the
above theorem. \Section{bbcells} proves part (a). \Section{schubert} interprets
our theorem for flag varieties, which is used in \Section{seidel} to prove the
conjecture about curve neighborhoods from \cite{buch.chaput.ea:seidel}.
\Section{qkseidel} discusses the more general conjecture as well as its
consequences in quantum $K$-theory. Finally, \Section{horospherical} interprets
our theorem for certain horospherical varieties.

We are particularly grateful to Mihail Tarigradschi, whose suggestions led to
our proof of \eqn{intro_extend} in type A. We also thank Allen Knutson for
suggesting the term \emph{$T$-convex}. We finally thank an anonymous referee for
a very careful reading of our manuscript and for several insightful suggestions.

% !TeX root=equivrigid.tex

\section{Torus actions}\label{sec:actions}

We work with varieties over a fixed algebraically closed field $\bK$. Varieties
are reduced but not necessarily irreducible. A point will always mean a closed
point. The multiplicative group of $\bK$ is denoted $\bG_m = \bK \ssm \{0\}$. An
(algebraic) torus is a group variety isomorphic to $(\bG_m)^r$ for some $r \in
\N$.

Let $T = (\bG_m)^r$ be an algebraic torus. Any rational representation $V$ of
$T$ is a direct sum $V = \bigoplus_\la V_\la$ of weight spaces $V_\la = \{v \in
V \mid t.v = \la(t) v ~\forall t \in T\}$ defined by characters $\la : T \to
\bG_m$. The \emph{weights} of $V$ are the characters $\la$ for which $V_\la \neq
0$. The group of all characters of $T$ is called the \emph{character lattice}
and is isomorphic to $\Z^r$. Given a $T$-variety $X$, we let $X^T \subset X$
denote the closed subvariety of $T$-fixed points. A subvariety $Z \subset X$ is
called \emph{$T$-stable} if $t.z \in Z$ for all $t \in T$ and $z \in Z$. In this
case $Z$ is itself a $T$-variety.

\begin{defn}\label{defn:extremal}%
  The $T$-fixed point $p \in X$ is \emph{non-degenerate} in $X$ if $T$ acts with
  non-zero weights on the Zariski tangent space $T_p X$. The point $p$ is
  \emph{fully definite} if all $T$-weights of $T_p X$ belong to a strict
  half-space of the character lattice of $T$.
\end{defn}

Equivalently, $p \in X^T$ is fully definite in $X$ if and only if there exists a
cocharacter $\rho : \bG_m \to T$ such that $\bG_m$ acts with strictly positive
weights on $T_pX$ though $\rho$. For example, if $X = G/P$ is a flag variety and
$T \subset G$ is a maximal torus, then all points of $X^T$ are fully definite in
$X$ (see \Section{schubert}). Any non-degenerate $T$-fixed point must be
isolated in $X^T$. Fully definite $T$-fixed points are called \emph{attractive}
in many sources, see e.g.\ \cite{brion:equivariant}; here we follow the
terminology from \cite{bialynicki-birula:some}.

\begin{remark}
  If $X$ is a normal quasi-projective $T$-variety, then $X^{\bG_m} = X^T$ holds
  for all general cocharacters $\rho : \bG_m \to T$. Here a cocharacter is
  called \emph{general} if it avoids finitely many hyperplanes in the lattice of
  all cocharacters. This follows because $X$ admits an equivariant embedding $X
  \subset \bP(V)$, where $V$ is a rational representation of $T$
  \cite{kambayashi:projective, mumford:geometric, sumihiro:equivariant}.
\end{remark}

In the rest of this paper we let $X$ be a non-singular $T$-variety. The
$T$-equivariant Chow cohomology ring of $X$ will be denoted $H^*_T(X)$, see
\cite{fulton:intersection, anderson.fulton:equivariant}. This is an algebra over
the ring $H^*_T(\pt)$, which may be identified with the symmetric algebra of the
character lattice of $T$. Given a class $\sigma \in H_T^*(X)$ and a $T$-fixed
point $p \in X^T$, we let $\sigma_p \in H^*_T(\pt)$ denote the pullback of
$\sigma$ along the inclusion $\{p\} \to X$. When $X$ is defined over $\bK = \C$,
Chow cohomology can be replaced with singular cohomology. In fact, our arguments
will only depend on equivariant classes $[Z]_p \in H_T^*(\pt)$ obtained by
restricting the class of a $T$-stable closed subvariety $Z \subset X$ to a fixed
point, and these restrictions are independent of the chosen cohomology theory.
Similarly, we can use cohomology with coefficients in either $\Z$ or $\Q$.

% !TeX root=equivrigid.tex

\section{Equivariant local classes}\label{sec:local}

Let $Z$ be a $T$-variety, fix $p \in Z^T$, and let $\fm \subset \cO_{Z,p}$ be
the maximal ideal in the local ring of $p$. Then the tangent cone $C_pZ =
\Spec(\bigoplus \fm^i/\fm^{i+1})$ is a $T$-stable closed subscheme of the
Zariski tangent space $T_pZ = (\fm/\fm^2)^\vee = \Spec(\Sym(\fm/\fm^2))$. The
\emph{local class} of $Z$ at $p$ is defined by (see
\cite[\S17.4]{anderson.fulton:equivariant})
\begin{equation}
  \eta_p Z = [C_p Z] \ \in\, H^*_T(T_p Z) = H^*_T(\pt) \,.
\end{equation}
When $p$ is a non-singular point of $Z$, we have $\eta_pZ = 1$.

\begin{prop}\label{prop:local}%
  Let $Z$ be a $T$-variety and let $p \in Z^T$ be fully definite in $Z$. Then
  $\eta_p Z \neq 0$ in $H^*_T(\pt)$.
\end{prop}
\begin{proof}
  We may assume that $p$ is a singular point of $Z$, so that $C_pZ$ has positive
  dimension. Choose $\bG_m \subset T$ such that $\bG_m$ acts with positive
  weights on $T_p Z$. It suffices to show that the class of $C_pZ$ is non-zero
  in $H^*_{\bG_m}(T_pZ)$. Let $\{v_1, \dots, v_n\}$ be a basis of $T_pZ$
  consisting of eigenvectors of $\bG_m$. Then the action of $\bG_m$ is given by
  $t.v_i = t^{a_i} v_i$ for positive integers $a_1, \dots, a_n > 0$. Set $A =
  \prod_{i=1}^n a_i$, and let $\bG_m$ act on $U = \bK^n$ by $t.u = t^A u$. Then
  the map $\phi : T_pZ \to U$ defined by
  \[
    \phi(c_1 v_1 + \dots + c_n v_n) = (c_1^{A/a_1}, \dots, c_n^{A/a_n})
  \]
  is a finite $\bG_m$-equivariant morphism. By
  \cite[Thm.~4]{edidin.graham:equivariant} we obtain
  \[
    H^*_{\bG_m}(U \ssm \{0\}) \otimes \Q = H^*(\bP U) \otimes \Q \,,
  \]
  where $\bP U = (U \ssm \{0\})/\bG_m \cong \bP^{n-1}$ is the projective space
  of lines in $U$, and
  \[
    \phi_*[C_p Z]\,|_{U \ssm \{0\}} =
    \deg(\phi)\, [\phi(C_p Z \ssm \{0\})/\bG_m] \ \in H^*(\bP U) \otimes \Q \,.
  \]
  The result now follows from the fact that every non-empty closed subvariety of
  projective space defines a non-zero Chow class.
\end{proof}

\begin{cor}\label{cor:local}%
  Let $X$ be a non-singular $T$-variety, $Z \subset X$ a $T$-stable closed
  subvariety, and $p \in Z^T$ a $T$-fixed point of $Z$. If $p$ is non-degenerate
  in $X$ and fully definite in $Z$, then $[Z]_p \neq 0 \in H^*_T(\pt)$.
\end{cor}
\begin{proof}
  By \cite[Prop.~17.4.1]{anderson.fulton:equivariant} we have $[Z]_p = c_m(T_p
  X / T_p Z) \cdot \eta_p Z$, where $m = \dim T_p X - \dim T_p Z$. The result
  therefore follows from \Proposition{local}, noting that $T$ acts with non-zero
  weights on $T_p X/T_p Z$.
\end{proof}

The following example rules out some potential generalizations of
\Corollary{local}.

\begin{example}
  Let $\bG_m$ act on $\bA^4$ by
  \[
    t.(a,b,c,d) = (ta,\, tb,\, t^{-1}c,\, t^{-1}d) \,.
  \]
  Set $Z = V(ad-bc) \subset \bA^4$, and let $p=(0,0,0,0)$ be the origin in
  $\bA^4$. Then $T_p Z = T_p \bA^4  = \bA^4$ and $C_p Z = Z$. Since $\bG_m$ acts
  trivially on the equation $ad-bc$, we have $\eta_p Z = [Z] = 0$ in
  $H^*_{\bG_m}(\bA^4)$ (see \cite[\S2.3]{anderson.fulton:equivariant}).
\end{example}

% !TeX root=equivrigid.tex

\section{Rigidity of convex subvarieties}\label{sec:rigidity}%

Let $T$ be an algebraic torus and let $X$ be a non-singular $T$-variety. We will
show in \Section{schubert} that Schubert varieties and Richardson varieties in a
flag variety $X$ satisfy the following two definitions.

\begin{defn}\label{defn:rigid}%
  A $T$-stable closed subvariety $\Omega \subset X$ is \emph{$T$-equivariantly
  rigid} if it is uniquely determined by its $T$-equivariant cohomology class up
  to a constant. More precisely, if $Z \subset X$ is any $T$-stable closed
  subvariety such that $[Z] = c\,[\Omega]$ holds in $H_T^*(X)$ for some $0 \neq
  c \in \Q$, then $Z = \Omega$.
\end{defn}

\begin{defn}\label{defn:fpi}%
  A $T$-stable closed subvariety $\Omega \subset X$ is \emph{$T$-convex} if, for
  any $T$-stable closed subvariety $Z \subset X$ satisfying $Z^T \subset
  \Omega$, we have $Z \subset \Omega$.
\end{defn}

When the action of $T$ is clear from the context, we frequently drop $T$ from
the notation and write simply \emph{equivariantly rigid} and \emph{convex}. Both
notions are properties of the $T$-equivariant embedding $\Omega \subset X$; for
example, any $T$-variety is convex as a subvariety of itself. Intersections of
$T$-convex subvarieties are again $T$-convex (with the reduced scheme
structure). Most of this paper concerns applications of the following
observation.

\begin{thm}\label{thm:rigid}%
  Let $X$ be a non-singular projective $T$-variety such that all fixed points $p
  \in X^T$ are fully definite in $X$. Then any irreducible $T$-convex subvariety
  of $X$ is $T$-equivariantly rigid.
\end{thm}
\begin{proof}
  Let $\Omega \subset X$ be irreducible and convex, and let $Z \subset X$ be any
  $T$-stable closed subvariety such that $[Z] = c\, [\Omega]$ holds in
  $H_T^*(X)$, with $0 \neq c \in \Q$. Then \Corollary{local} shows that $Z^T =
  \Omega^T = \{ p \in X^T : [Z]_p \neq 0 \}$. Since $\Omega$ is convex, we
  obtain $Z \subset \Omega$. Finally, the assumption $[Z] = c\,[\Omega]$ implies
  that $Z$ and $\Omega$ have the same dimension, so we must have $Z = \Omega$.
\end{proof}

\begin{example}
  Let $X$ be a non-singular projective $T$-variety, let $H^*_T(X)$ be the
  $T$-equivariant Chow cohomology ring, and let $\cL$ be a $T$-equivariant line
  bundle. Given a section $f \in \Gamma(X,\cL)$, the associated divisor $D =
  Z(f)$ is $T$-stable if and only if $f$ is semi-invariant, that is, $f \in
  \Gamma(X,\cL)_\la$ for some character $\la$. In this case $f$ is an
  equivariant section of $\cL \otimes \bK_{-\la}$, hence $[D] = c_1(\cL) -
  c_1(\bK_\la) \in H^*_T(X)$. Moreover, the $T$-stable effective Cartier
  divisors $D'$ satisfying $[D'] = [D]$ are in bijective correspondence with
  $\bP(\Gamma(X,\cL)_\la)$. It follows that if $D$ is reduced and $\dim
  \Gamma(X,\cL^{\otimes m})_{m \la} = 1$ for all $m \in \N$, then $D$ is
  $T$-equivariantly rigid. This observation can be used to produce examples of
  equivariantly rigid subvarieties that are not convex. For example, if $T =
  (\bG_m)^{n+1}$ acts on $\bP^n$ through the standard action on $\bK^{n+1}$,
  then any reduced $T$-stable divisor $D \subset \bP^n$ is equivariantly rigid,
  but $D$ is convex only if it is irreducible, see \Theorem{rigidschub}. We have
  not found an example of an irreducible $T$-stable subvariety that is
  equivariantly rigid but not convex.
\end{example}

% !TeX root=equivrigid.tex

\section{Rigidity of Bialynicki-Birula cells}\label{sec:bbcells}%

The multiplicative group $\bG_m$ is identified with the complement of the origin
in $\bA^1$. Given a morphism of varieties $f : \bG_m \to X$, we write
$\lim_{t\to0} f(t) = p$ if $f$ can be extended to a morphism $\Bar{f} : \bA^1
\to X$ such that $\Bar{f}(0) = p$. This limit is unique when it exists, and it
always exists when $X$ is complete.

Let $X$ be a non-singular projective $\bG_m$-variety such that $X^{\bG_m}$ is
finite. Then each fixed point $p \in X^{\bG_m}$ defines the (positive)
Bialynicki-Birula cell
\[
  X_p^+ = \{ x \in X \mid \lim_{t\to0} t.x = p \} \,.
\]
A negative cell is similarly defined by $X_p^- = \{ x \in X \mid \lim_{t\to0}
t^{-1}.x = p\}$. By \cite[Thm.~4.4]{bialynicki-birula:some}, these cells form a
locally closed decomposition of $X$,
\begin{equation}\label{eqn:bbdecomp}%
  X = \bigcup_{p \in X^{\bG_m}} X_p^+ \,,
\end{equation}
that is, a disjoint union of locally closed subsets. In addition, each cell
$X_p^+$ is isomorphic to an affine space.

\begin{lemma}\label{lemma:include}%
  For any $\bG_m$-stable closed subset $Z \subset X$, we have $Z \subset
  {\ds\bigcup_{p \in Z^{\bG_m}} X_p^+}$.
\end{lemma}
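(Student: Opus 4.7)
The plan is to show, for each point $z \in Z$, that $z$ belongs to $X_p^+$ for some $p \in Z^{\bG_m}$. Since $X$ is projective, the orbit map $\bG_m \to X$, $t \mapsto t.z$, extends uniquely to a morphism $\Bar f : \bA^1 \to X$, and the value $p = \Bar f(0) = \lim_{t\to 0} t.z$ is a $\bG_m$-fixed point of $X$; by definition $z \in X_p^+$. So everything reduces to verifying that $p$ actually lies in $Z$ (and hence in $Z^{\bG_m}$).

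The key observation is that $Z$ is $\bG_m$-stable and closed in $X$: the orbit $\bG_m.z$ is contained in $Z$, and therefore its closure $\overline{\bG_m.z}$ in $X$ is also contained in $Z$. Since $p$ is the image under $\Bar f$ of the point $0 \in \bA^1$, which lies in the closure of $\bG_m \subset \bA^1$, continuity of $\Bar f$ forces $p \in \overline{\bG_m . z} \subset Z$. Combined with $p \in X^{\bG_m}$, this gives $p \in Z^{\bG_m}$, and thus $z \in X_p^+ \subset \bigcup_{q \in Z^{\bG_m}} X_q^+$.

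The argument is essentially formal and I do not expect any real obstacle: the only ingredients used are the properness of $X$ (to guarantee the limit exists), the fact that closed subsets contain limits of sequences (equivalently, closures of contained subsets), and the definition of the $\bG_m$-stability of $Z$. The statement can also be viewed as the assertion that limits of $\bG_m$-orbits stay inside any closed stable subset, which is a standard feature of Bialynicki-Birula decompositions.
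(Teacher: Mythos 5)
Your proof takes the same approach as the paper's (one-sentence) proof: send $z$ to its limit $p = \lim_{t\to 0} t.z$, note that $p$ is a fixed point lying in $Z$ because $Z$ is closed and $\bG_m$-stable, and conclude $z \in X_p^+$ with $p \in Z^{\bG_m}$. You have merely spelled out the details the paper leaves implicit.
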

\begin{proof}
  For any point $x \in Z$, we have $x \in X_p^+$, where $p = {\ds \lim_{t\to0}
  t.x} \in Z^{\bG_m}$.
\end{proof}

\begin{defn}
  A locally closed decomposition $X = \bigcup X_i$ is called a
  \emph{stratification} if each subset $X_i$ is non-singular and its closure
  $\ov{X_i}$ is a union of subsets $X_j$ of the decomposition.
\end{defn}

The Bialynicki-Birula decomposition \eqn{bbdecomp} typically fails to be a
stratification, for example when $X$ is the blow-up of $\bP^2$ at the point
$[0,1,0]$, where $\bG_m$ acts on $\bP^2$ by $t.[x,y,z] = [x,ty,t^2z]$, see
\cite[Ex.~1]{bialynicki-birula:some*1}. \Lemma{include} shows that the
Bialynicki-Birula decomposition is a stratification if and only if $X_q^+
\subset \ov{X_p^+}$ holds for each fixed point $q \in (\ov{X_p^+})^{\bG_m}$. It
was proved in \cite[Thm.~5]{bialynicki-birula:some*1} that the decomposition is
a stratification when each positive cell $X_p^+$ meets each negative cell
$X_q^-$ transversally. In particular, this holds when $X = G/P$ is a flag
variety and $\bG_m \subset G$ is a general 1-parameter subgroup, see
\cite[Ex.~4.2]{mcgovern:adjoint} or \Lemma{flagvar}. When both the positive and
negative Bialynicki-Birula decompositions are stratifications, all cells $X_p^+$
and $X_q^-$ of complementary dimensions meet transversally, hence the positive
and negative cell closures form a pair of Poincare dual bases of the cohomology
ring $H^*(X)$, see \cite[Lemma~3.11]{benedetti.perrin:cohomology}. In this paper
we utilize the following application, which is a consequence of \Lemma{include}.

\begin{prop}\label{prop:bb-fpi}%
  Assume that the Bialynicki-Birula decomposition of $X$ is a stratification.
  Then each cell closure $\ov{X_p^+} \subset X$ is $\bG_m$-convex.
\end{prop}

\begin{cor}\label{cor:bb-rigid}%
  Let $T$ be an algebraic torus and $X$ a non-singular projective $T$-variety
  such that all fixed points $p \in X^T$ are fully definite in $X$. Assume that
  $X^T = X^{\bG_m}$ for some 1-parameter subgroup $\bG_m \subset T$, such that
  the associated Bialynicki-Birula decomposition of $X$ is a stratification.
  Then each cell closure $\ov{X_p^+}$ is $T$-convex and $T$-equivariantly rigid.
\end{cor}
\begin{proof}
  The cell $X_p^+$ is $T$-stable because $T$ is commutative and $p \in X^T$. The
  result now follows from \Theorem{rigid} and \Proposition{bb-fpi}.
\end{proof}

\begin{question}
  We do not know whether \Proposition{bb-fpi} and \Corollary{bb-rigid} are true
  without the assumption that the Bialynicki-Birula decomposition of $X$ is a
  stratification. It would be very interesting to settle this question.
\end{question}

\begin{example}
  Let $X$ be a non-singular projective toric variety, with torus $T \subset X$,
  and choose $\bG_m \subset T$ such that $X^T = X^{\bG_m}$. We show that the
  conclusion of \Corollary{bb-rigid} holds, even though the Bialynicki-Birula
  decomposition is rarely a stratification. All fixed points $p \in X^T$ are
  fully definite in $X$, as the weights of $T_pX$ form a basis of the character
  lattice of $T$. The $T$-orbits $O_\tau \subset X$ correspond to the cones
  $\tau$ of the fan defining $X$, and we have $O_\sigma \subset \ov{O_\tau}$ if
  and only if $\tau$ is a face of $\sigma$, see
  \cite[\S3.1]{fulton:introduction}. In particular, the $T$-fixed points in $X$
  correspond to the maximal cones $\sigma$. Since $X$ is complete, each cone
  $\tau$ is the intersection of the maximal cones $\sigma$ corresponding to the
  $T$-fixed points in $\ov{O_\tau}$. Since all cell closures $\ov{X_p^+}$ are
  $T$-orbit closures, it suffices to show that each orbit closure $\ov{O_\tau}$
  is $T$-convex. Let $Z \subset X$ be a $T$-stable closed subvariety such that
  $Z^T \subset \ov{O_\tau}$. We may assume that $Z$ is irreducible, in which
  case $Z = \ov{O_\ka}$ is also a $T$-orbit closure. Since $\ka$ is the
  intersection of the maximal cones given by the fixed points in $Z^T$, we
  obtain $\tau \subset \ka$ and $\ov{O_\ka} \subset \ov{O_\tau}$, as required.
  Now assume that $X$ has dimension two. By \cite[Cor.~1 of
  Thm.~4.5]{bialynicki-birula:some}, there is a unique repulsive fixed point $b
  \in X^{\bG_m}$ with $X_b^+ = \{b\}$, and a unique attractive fixed point $a
  \in X^{\bG_m}$ such that $X_a^+$ is a dense open subset of $X$. For all other
  fixed points $p \in X^{\bG_m} \ssm \{a,b\}$, the cell $X_p^+ \cong \bA^1$ is a
  line. If the Bialynicki-Birula decomposition of $X$ is a stratification, then
  $b \in \ov{X_p^+}$ for all $p \in X^{\bG_m}$. The $T$-fixed point $b$
  corresponds to a maximal cone $\sigma$, and $b$ is connected to exactly two
  $T$-stable lines corresponding to the rays forming the boundary of this cone.
  We deduce that $X$ contains at most four $T$-fixed points. Higher dimensional
  toric varieties for which the Bialynicki-Birula decomposition is not a
  stratification can be constructed by taking products. We do not know if the
  cell closures $\ov{X_p^+}$ are $\bG_m$-convex when $X$ is a toric variety.
\end{example}

% !TeX root=equivrigid.tex

\section{Rigidity of Richardson varieties}\label{sec:schubert}%

Let $X = G/P = \{g.P \mid g \in G\}$ be a flag variety defined by a connected
reductive linear algebraic group $G$ and a parabolic subgroup $P$. Fix a maximal
torus $T$ and a Borel subgroup $B$ such that $T \subset B \subset P \subset G$.
The opposite Borel subgroup $B^- \subset G$ is defined by $B^- \cap B = T$. Let
$\Phi$ be the root system of non-zero weights of $T_1 G$, the tangent space of
$G$ at the identity element. The positive roots $\Phi^+$ are the non-zero
weights of $T_1 B$. Let $W = N_G(T)/T$ be the Weyl group of $G$, $W_P =
N_P(T)/T$ the Weyl group of $P$, and let $W^P \subset W$ be the subset of
minimal representatives of the cosets in $W/W_P$. The set of $T$-fixed points in
$X$ is given by $X^T \,=\, \{ w.P \mid w \in W \}$, where each point $w.P$
depends only on the coset $w W_P$ in $W/W_P$. Each fixed point $w.P$ defines the
\emph{Schubert varieties} $X_w = \ov{Bw.P}$ and $X^w = \ov{B^-w.P}$. For $w \in
W^P$ we have $\dim(X_w) = \codim(X^w,X) = \ell(w)$. The Bruhat order $\leq$ on
$W^P$ is defined by
\[
  u \leq w \ \ \Leftrightarrow \ \
  X_u \subset X_w \ \ \Leftrightarrow \ \
  X^u \supset X^w \ \ \Leftrightarrow \ \
  X^u \cap X_w \neq \emptyset \,.
\]
A \emph{Richardson variety} is any non-empty intersection $X_w^u = X_w \cap X^u$
of opposite Schubert varieties in $X$. More generally, any $G$-translate of
$X_w^u$ will be called a Richardson variety. Any Richardson variety is reduced,
irreducible, and rational, see \cite{deodhar:some} and
\cite[\S2]{brion.kumar:frobenius}.

Recall that a cocharacter $\rho : \bG_m \to T$ is \emph{strongly dominant} if
$\langle \al, \rho \rangle > 0$ for all positive roots $\al \in \Phi^+$, where
$\langle \al, \rho \rangle \in \Z$ is defined by $\al(\rho(t)) = t^{\langle \al,
\rho \rangle}$ for $t \in \bG_m$. The following lemma is well known, see e.g.\
\cite[Ex.~4.2]{mcgovern:adjoint} or
\cite[Cor.~3.14]{benedetti.perrin:cohomology}.

\begin{lemma}\label{lemma:flagvar}%
  Let $\rho : \bG_m \to T$ be a strongly dominant cocharacter. Then the
  associated Bialynicki-Birula cells of $X$ are given by $X_p^+ = B.p$, for $p
  \in X^T$.
\end{lemma}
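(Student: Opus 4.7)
The plan is to compare the Bruhat decomposition $X = \bigsqcup_{p \in X^T} B.p$ with the Bialynicki-Birula decomposition $X = \bigsqcup_{q \in X^{\bG_m}} X_q^+$. Both are disjoint unions covering $X$, so once I show that both are indexed by the same set $X^T$ and that $B.p \subseteq X_p^+$ for every $p \in X^T$, each inclusion must in fact be an equality.

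First I would verify that $X^{\bG_m} = X^T$. Strong dominance gives $\langle \al,\rho\rangle > 0$ for every $\al \in \Phi^+$, hence $\langle \al,\rho\rangle < 0$ for every $\al \in \Phi^-$; in particular $\rho$ is regular in the sense that $\langle \al,\rho\rangle \neq 0$ for all $\al \in \Phi$. Since the $T$-weights on each tangent space $T_pX$ at a $T$-fixed point are roots, $\bG_m$ acts with non-zero weights on every non-trivial direction of $T_pX$, which by the remark after \Definition{extremal} yields $X^{\bG_m} = X^T$.

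Next, fix $p = wP \in X^T$ and analyze the Schubert cell $B.p$. Since $T$ fixes $p$, we have $B.p = U.p$ where $U$ is the unipotent radical of $B$. The standard theory of flag varieties furnishes a $T$-equivariant isomorphism from $B.p$ onto a product of positive-root subgroups indexed by the inversion set $\Phi^+ \cap w\Phi^-$, with $T$ acting on each factor through the corresponding positive root. Under this identification, strong dominance of $\rho$ implies that $\rho(t)$ acts on each coordinate direction of $B.p$ by a power of $t$ with strictly positive exponent, so $\lim_{t\to 0} \rho(t).x = p$ for every $x \in B.p$, giving $B.p \subseteq X_p^+$. Combined with the first step, this forces equality cell by cell.

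The only real ingredient is the $T$-equivariant identification of the Schubert cell with a module of positive-root characters; this is a classical fact from the theory of algebraic groups and will be the step most deserving of a citation. Everything else is a formal consequence of disjointness of the two decompositions and the regularity built into the definition of a strongly dominant cocharacter, so I do not anticipate a genuine obstacle beyond invoking the standard parametrization of Bruhat cells.
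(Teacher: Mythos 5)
Your proof is correct, and the overall skeleton (show $B.p \subset X_p^+$, then invoke disjointness of both the Bruhat and Bialynicki-Birula decompositions to force equality) is the same as the paper's. But the key inclusion $B.p \subset X_p^+$ is established by a genuinely different argument. You work downstairs on $X$, using the $T$-equivariant parametrization of the Schubert cell $B.p$ by a product of root subgroups $U_\al$ with $\al$ in the inversion set, and observe that strong dominance makes all the $\rho$-weights on these coordinates strictly positive, hence $\lim_{t\to 0}\rho(t).x = p$. The paper instead works upstairs on $G$: it lets $\bG_m$ act on $G$ by conjugation through $\rho$, cites Springer for the facts that the fixed-point set of this action is $T$ and the attracting set of $T$ is exactly $B$, and then pushes down to $G/P$ to conclude $B.p \subset X_p^+$. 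Your route is more hands-on and makes the role of the positive-root weights explicit; the paper's is shorter and avoids unwinding the cell coordinates, at the cost of invoking the group-level BB decomposition of $G$. One small imprecision to fix in your write-up: for $p = wP$ with $w \in W^P$ the relevant inversion set is $\Phi^+ \cap w(\Phi^-\ssm\Phi_P)$ rather than $\Phi^+\cap w\Phi^-$ (the latter is the $G/B$ version); since these are still all positive roots, your conclusion is unaffected.
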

\begin{proof}
  Let $\bG_m$ act on $G$ by conjugation through $\rho$. The fixed point set for
  this action is \cite[(7.1.2), (7.6.4)]{springer:linear*1}
  \[
    T = \{ g \in G \mid t g t^{-1} = g ~\forall\, t \in \bG_m \} \,,
  \]
  and the corresponding Bialynicki-Birula cell is
  \cite[(8.2.1)]{springer:linear*1}
  \[
    B = \{ g \in G \mid \lim_{t\to 0} t g t^{-1} \in T \} \,.
  \]
  This implies $B.p \subset X_p^+$ for any fixed point $p \in X^{\bG_m}$. We
  deduce from \eqn{bbdecomp} that the positive Bialynicki-Birula cells in $X$
  are the $B$-orbits.
\end{proof}

\begin{lemma}\label{lemma:stable-trans}%
  Let $Y$ be any $G$-variety, and $\Omega \subset Y$ a $T$-stable closed
  subvariety. Any $T$-stable $G$-translate of $\Omega$ has the form $w.\Omega$,
  with $w \in N_G(T)$.
\end{lemma}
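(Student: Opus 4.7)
The plan is to introduce the stabilizer $H = \{h \in G : h.\Omega = \Omega\}$, a closed subgroup of $G$, and to compare the maximal tori of $G$ that sit inside $H$. Given a $T$-stable translate $g.\Omega$, the $T$-stability of $\Omega$ itself yields $T \subset H$, while the $T$-stability of $g.\Omega$ rewrites as $t \cdot g.\Omega = g.\Omega$ for all $t \in T$, i.e., $g^{-1} T g \subset H$. So both $T$ and $g^{-1}Tg$ are tori sitting inside $H$.

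Next I would observe that both are \emph{maximal} tori of $H$: any torus of $H$ containing $T$ is, in particular, a torus of $G$ containing the maximal torus $T$, hence equal to $T$; the same applies to $g^{-1}Tg$. The key tool is then the classical conjugacy theorem for maximal tori in a linear algebraic group (with conjugator in the identity component), applied to $H$: this produces $h \in H$ with $hTh^{-1} = g^{-1}Tg$.

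To finish, set $w = gh$. A direct computation gives
\[
  w T w^{-1} \,=\, g(hTh^{-1})g^{-1} \,=\, g(g^{-1}Tg)g^{-1} \,=\, T,
\]
so $w \in N_G(T)$; and $w.\Omega = g.(h.\Omega) = g.\Omega$, since $h \in H$ fixes $\Omega$ setwise. Thus the arbitrary $T$-stable translate $g.\Omega$ has been written in the required form $w.\Omega$ with $w \in N_G(T)$.

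I anticipate no significant obstacle: the argument is essentially formal once the stabilizer $H$ is introduced, and it relies only on the observation that a maximal torus of $G$ lying inside a subgroup is automatically a maximal torus there, combined with the standard conjugacy of maximal tori in a linear algebraic group.
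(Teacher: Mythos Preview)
Your proof is correct and follows essentially the same approach as the paper: both arguments introduce the stabilizer subgroup, observe that it contains two maximal tori of $G$ (hence maximal in the stabilizer), and invoke conjugacy of maximal tori to produce the required element of $N_G(T)$. The only cosmetic difference is that the paper takes $H$ to be the stabilizer of the translate $g.\Omega$ rather than of $\Omega$, which amounts to replacing your $g^{-1}Tg$ by $gTg^{-1}$.
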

\begin{proof}
  Let $\Omega' = g.\Omega$ be a $T$-stable translate, and let $H \subset G$ be
  the stabilizer of $\Omega'$. Since $T$ and $g T g^{-1}$ are maximal tori in
  $H$, we can choose $h \in H$ such that $T = h g T g^{-1} h^{-1}$. We obtain
  $hg \in N_G(T)$ and $\Omega' = h.\Omega' = hg.\Omega$, as required.
\end{proof}

\begin{thm}\label{thm:rigidschub}%
  Any $T$-stable Richardson variety in the flag variety $X = G/P$ is $T$-convex
  and $T$-equivariantly rigid.
\end{thm}
\begin{proof}
  It follows from \Proposition{bb-fpi} and \Lemma{flagvar} that all Schubert
  varieties $X_w$ and $X^u$ are convex. This implies that every Richardson
  variety $X_w^u = X_w \cap X^u$ is convex, hence all $T$-stable Richardson
  varieties in $X$ are convex by \Lemma{stable-trans}. The $B$-fixed point $p =
  1.P$ is fully definite in $X$ because the weights of $T_p X$ are a subset of
  the negative roots of $G$. Since $W$ acts transitively on $X^T$, this implies
  that all $T$-fixed points in $X$ are fully definite. The result therefore
  follows from \Theorem{rigid}.
\end{proof}

Let $E = G/B$ denote the variety of complete flags, and let $\pi : E \to X$ be
the natural projection. A \emph{projected Richardson variety} in $X$ is the
image $\Pi_w^u(X) = \pi(E_w^u)$ of a Richardson variety in $E$. Projected
Richardson varieties in the Grassmannian $X = \Gr(m,n)$ of type A, obtained as
images of Richardson varieties in $\Fl(n)$, are also called \emph{positroid
varieties}.

\begin{cor}\label{cor:positroid}%
  Let $X = \Gr(m,n)$ be a Grassmannian of type A, and let $T = (\bG_m)^n$ act on
  $X$ through the diagonal action on $\bK^n$. Then all positroid varieties in
  $X$ are $T$-convex and $T$-equivariantly rigid.
\end{cor}
\begin{proof}
  It was proved in \cite{knutson.lam.ea:positroid} that any positroid variety
  $\Omega$ is defined by Plucker equations. Equivalently, $\Omega$ is an
  intersection of $T$-stable Schubert divisors, so $\Omega$ is convex by
  \Theorem{rigidschub} and equivariantly rigid by \Theorem{rigid}.
\end{proof}

\begin{remark}\label{remark:projrich}%
  \Corollary{positroid} does not hold for projected Richardson varieties in
  arbitrary flag varieties $X = G/P$. Each simple root $\be$ defines a projected
  Richardson divisor $D_\be = \Pi_{w_0^P}^{s_\be}(X)$, where $w_0^P$ denotes the
  longest element in $W^P$. It frequently happens that two distinct divisors
  $D_{\be'}$ and $D_{\be''}$ have the same $T$-equivariant cohomology and
  $K$-theory classes, which implies that these divisors are not equivariantly
  rigid. For example, this is the case for the quadric hypersurfaces of
  dimensions $7$ and $8$, of Lie types $B_4$ and $D_5$, and the two-step flag
  variety $\Fl(1,4;5)$ of type $A_4$. For other flag varieties $X$, all
  projected Richardson varieties have distinct equivariant classes, but some
  projected Richardson divisor $D_\be$ contains all $T$-fixed points in $X$,
  which rules out that $D_\be$ is convex. For example, this is the case for the
  Lagrangian Grassmannian $\LG(2,4)$ of type $C_2$ and the maximal orthogonal
  Grassmannian $\OG(4,8)$ of type $D_4$. This is a special case of
  \cite[Lemma~3.1]{benedetti.perrin:cohomology}, which can be used to produce
  many more examples.
\end{remark}

Any element $u \in W$ has a unique factorization $u = u^P u_P$ for which $u^P
\in W^P$ and $u_P \in W_P$, called the \emph{parabolic factorization} with
respect to $P$. This factorization is \emph{reduced} in the sense that $\ell(u)
= \ell(u^P) + \ell(u_P)$. The parabolic factorization of the longest element
$w_0 \in W$ is $w_0 = w_0^P w_{0,P}$, where $w_0^P$ and $w_{0,P}$ are the
longest elements in $W^P$ and $W_P$, respectively. Since $w_0$ and $w_{0,P}$ are
self-inverse, we have $w_{0,P} = w_0 w_0^P$. As preparation for the next
section, we prove the following identity of Schubert varieties.

\begin{lemma}\label{lemma:dualpoint}%
  Let $Q \subset G$ be a parabolic subgroup containing $B$ and set $w = w_0^Q$.
  Then $w^{-1}.X^w = X_{w_0 w}$.
\end{lemma}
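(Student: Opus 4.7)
The plan is to reduce the claim to showing that the Schubert variety $X_{w_0 w} \subset G/P$ is stable under left translation by $(w_0 w)^{-1}$. Using that $w_0$ is an involution, so $B^- = w_0 B w_0$, one computes
\[
X^w \,=\, \ov{B^- w . P} \,=\, \ov{w_0 B (w_0 w) . P} \,=\, w_0 . \ov{B (w_0 w) . P} \,=\, w_0 . X_{w_0 w} \,,
\]
and then left multiplication by $w^{-1}$, together with the identity $w^{-1} w_0 = (w_0 w)^{-1}$, gives $w^{-1} . X^w = (w_0 w)^{-1} . X_{w_0 w}$. The parabolic factorization $w_0 = w_0^Q w_{0,Q} = w\, w_{0,Q}$ combined with $w_0^2 = e$ yields $w_0 w = w_{0,Q}$, so the lemma reduces to the identity $w_{0,Q} . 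X_{w_{0,Q}} = X_{w_{0,Q}}$ in $G/P$.

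For the latter, I would identify $X_{w_{0,Q}}$ with the $Q$-orbit closure $\ov{Q.eP} \subset G/P$. Since $w_{0,Q}$ is the longest element of $W_Q$, the Bruhat cell $B w_{0,Q} B$ is open and dense in $Q$, and the inclusion $B \subset P$ gives $BP = P$, so
\[
X_{w_{0,Q}} \,=\, \ov{B w_{0,Q} . P} \,=\, \ov{B w_{0,Q} B . P} \,=\, \ov{Q.P} \,=\, \ov{Q.eP} \,.
\]
This orbit closure is manifestly $Q$-stable, hence is preserved by left multiplication by any lift of $w_{0,Q} \in W_Q$ to $N_G(T) \cap Q$, which yields the required identity.

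The main (mild) obstacle is the bookkeeping with the longest elements $w_0$, $w_{0,Q}$, and $w = w_0^Q$ via the parabolic factorization, which is immediate once one uses that $w_0$ and $w_{0,Q}$ are involutions. No hypothesis connecting $P$ and $Q$ is needed: every computation takes place in $G/P$ and only uses $B \subset P$ and $B \subset Q$.
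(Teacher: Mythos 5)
Your proof is correct and follows essentially the same route as the paper's: both hinge on the observation that $X_{w_{0,Q}} = X_{w_0 w}$ is stable under translation by $w_{0,Q} = w_0 w$ because it is the closure of a $Q$-orbit, combined with the relations $w_0 w = w_{0,Q}$ and $w_0.X_{w_0 w} = X^w$. The only difference is presentational (you unwind $w^{-1}.X^w$ first and then invoke $Q$-stability, while the paper starts from $w_{0,Q}.X_{w_{0,Q}} = X_{w_{0,Q}}$ and translates by $w$), and you spell out in a bit more detail why $X_{w_{0,Q}} = \ov{Q.eP}$, a fact the paper takes as known.
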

\begin{proof}
  Since $X_{w_{0,Q}}$ is a $Q$-stable Schubert variety, we have $X_{w_{0,Q}} =
  w_{0,Q}.X_{w_{0,Q}}$. By translating both sides by $w = w_0^Q$, we obtain
  $w.X_{w_0 w} = w_0.X_{w_0 w} = X^w$.
\end{proof}

% !TeX root=equivrigid.tex

\section{Seidel Neighborhoods}\label{sec:seidel}

In this section we prove a conjecture about curve neighborhoods from
\cite{buch.chaput.ea:seidel}. Since this conjecture and its proof relies on the
moduli space of stable maps, we will restrict our attention to varieties defined
over the field $\bK = \C$ of complex numbers. As in \Section{schubert}, we let
$X = G/P$ denote a flag variety.

For any effective degree $d \in H_2(X,\Z)$, we let $M_d = \Mb_{0,3}(X,d)$ denote
the Kontsevich moduli space of 3-pointed stable maps to $X$ of degree $d$ and
genus zero, see \cite{fulton.pandharipande:notes}. The evaluation map $\ev_i :
M_d \to X$, defined for $1 \leq i \leq 3$, sends a stable map to the image of
the $i$-th marked point in its domain. Given two opposite Schubert varieties
$X_v$ and $X^u$, the \emph{Gromov-Witten variety} $M_d(X_v,X^u)$ is the variety
of stable maps that send the first two marked points to $X_v$ and $X^u$:
\[
  M_d(X_v,X^u) \,=\, \ev_1^{-1}(X_v) \cap X_2^{-1}(X^u) \ \subset M_d \,.
\]
The \emph{curve neighborhood} $\Gamma_d(X_v,X^u)$ is the union of all stable
curves of degree $d$ in $X$ connecting $X_v$ and $X^u$:
\[
  \Gamma_d(X_v,X^u) \,=\, \ev_3(M_d(X_v,X^u)) \ \subset X \,.
\]

Let $\Z[q] = \Span_\Z\{ q^d : d \in H_2(X,\Z) \text{ effective} \}$ be the
semigroup ring defined by the effective curve classes on $X$. The equivariant
quantum cohomology ring of $X$ is an algebra over $H_T^*(\pt) \otimes_\Z \Z[q]$,
which is defined by $\QH_T(X) = H^*_T(X) \otimes_\Z \Z[q]$ as a module. The
\emph{quantum product} of two opposite Schubert classes is given by
\[
  [X_v] \star [X^u] \,=\, \sum_{d \geq 0} q^d \ev_{3,*}[M_d(X_v, X^u)] \,,
\]
where the sum is over all effective degrees $d \in H_2(X;\Z)$.

A simple root $\ga \in \Phi^+$ is called \emph{cominuscule} if, when the highest
root is written in the basis of simple roots, the coefficient of $\ga$ is one.
The flag variety $G/Q$ is cominuscule if $Q$ is a maximal parabolic subgroup
corresponding to a cominuscule simple root $\ga$, that is, $s_\ga$ is the unique
simple reflection in $W^Q$. Let $W^\comin \subset W$ be the subset of point
representatives of cominuscule flag varieties of $G$, together with the identity
element:
\[
  W^\comin \,=\, \{ w_0^Q \mid G/Q \text{ is cominuscule} \} \cup \{1\} \,.
\]
This is a subgroup of $W$, which is isomorphic to the quotient of the coweight
lattice of $\Phi$ modulo the coroot lattice
\cite[Prop.~VI.2.6]{bourbaki:elements*78}. The isomorphism sends $w_0^Q$ to the
class of the fundamental coweight $\om_\ga^\vee$ corresponding to $Q$. Notice
that $\ga$ is the unique simple root for which $w_0^Q.\ga < 0$. In the following
we set $d(w_0^Q, u) = \omega_\ga^\vee - u^{-1}.\omega_\ga^\vee \in H_2(X;\Z)$
for any $u \in W$. Here we identify the group $H_2(X,\Z)$ with a quotient of the
coroot lattice, by mapping each simple coroot $\be^\vee$ to the curve class
$[X_{s_\be}]$ if $s_\be \in W^P$, and to zero otherwise.

The \emph{Seidel representation} of $W^\comin$ on $\QH(X)/\langle q-1 \rangle$
is defined by $w.[X^u] = [X^w] \star [X^u]$ for $w \in W^\comin$ and $u \in W$.
In fact, we have \cite{seidel:1, belkale:transformation,
chaput.manivel.ea:affine}
\begin{equation}\label{eqn:seidel}%
  [X^w] \star [X^u] \,=\, q^{d(w,u)}\, [X^{w u}]
\end{equation}
in the (non-equivariant) quantum ring $\QH(X)$. This implies that $d(w,u)$ is
the unique minimal degree $d$ for which $\Gamma_d(X_{w_0 w}, X^u)$ is not empty
\cite{fulton.woodward:quantum, buch.chung.ea:euler}. More generally, it was
proved in \cite{chaput.manivel.ea:affine, chaput.perrin:affine} that the
identity
\begin{equation}\label{eqn:htseidel}%
  [X^w] \star [w.X^u] \,=\, q^{d(w,u)}\, [X^{w u}]
\end{equation}
holds in the equivariant quantum cohomology ring $\QH_T(X)$. We will discuss
generalizations to quantum $K$-theory in \Section{qkseidel}.

It follows from \eqn{seidel} and the definition of the quantum product in
$\QH(X)$ that $[\Gamma_{d(w,u)}(X_{w_0w}, X^u)] = [X^{w u}]$ holds in $H^*(X)$.
Conjecture~3.11 from \cite{buch.chaput.ea:seidel} asserts that
$\Gamma_{d(w,u)}(X_{w_0 w}, X^u)$ is in fact equal to the translated Schubert
variety $w^{-1}.X^{w u}$. This is proved below as a consequence of
\Theorem{rigidschub} and \eqn{htseidel}. This result was known when $X = G/P$ is
cominuscule and $w = w_0^P$ \cite{buch.chaput.ea:seidel}, when $X$ is a
Grassmannian of type A and $[X^w]$ is a special Seidel class
\cite[Cor.~4.6]{li.liu.ea:seidel}, when $X$ is any flag variety of type A
\cite{tarigradschi:curve}, and when $X$ is the symplectic Grassmannian
$\SG(2,2n)$ \cite[Thm.~8.1]{benedetti.perrin.ea:quantum}.

\begin{thm}\label{thm:seidelnbhd}%
  Let $X = G/P$ be a complex flag variety. For $w \in W^\comin$ and $u \in W$ we
  have $\Gamma_{d(w,u)}(X_{w_0 w}, X^u) = w^{-1}.X^{w u}$.
\end{thm}
\begin{proof}
  By applying $w^{-1}$ to both sides of \eqn{htseidel} and using
  \Lemma{dualpoint}, we obtain
  \[
    [X_{w_0 w}] \star [X^u] \,=\, q^{d(w,u)}\, [w^{-1}.X^{w u}]
  \]
  in $\QH_T(X)$. By definition of the quantum product, this implies that
  \[
    [w^{-1}.X^{w u}] \,=\, \ev_{3,*} [M_{d(w,u)}(X_{w_0 w}, X^u)]
    \,=\, c\, [\Gamma_{d(w,u)}(X_{w_0 w}, X^u)]
  \]
  holds in $H^*_T(X)$, where $c$ is the degree of the map $\ev_3 :
  M_{d(w,u)}(X_{w_0 w}, X^u) \to \Gamma_{d(w,u)}(X_{w_0 w}, X^u)$. The result
  therefore follows from \Theorem{rigidschub}.
\end{proof}

% !TeX root=equivrigid.tex

\section{Seidel products in quantum $K$-theory}\label{sec:qkseidel}%

In this section we discuss a generalization of the Seidel multiplication formula
to quantum $K$-theory. We start by briefly recalling the definition of quantum
$K$-theory. A more detailed discussion can be found in
\cite[\S2]{buch.chaput.ea:chevalley}.

Let $X = G/P$ be a flag variety defined over $\bK = \C$. The equivariant
$K$-theory ring $K^T(X)$ is an algebra over the representation ring $\Gamma =
K^T(\pt)$. The equivariant quantum $K$-theory ring $\QK_T(X)$ was originally
constructed by Givental and Lee \cite{givental:wdvv, lee:quantum*1}. This ring
is an algebra over the formal power series ring $\Gamma\llbracket q \rrbracket =
\Gamma\llbracket q_\be : s_\be \in W^P \rrbracket$, which has one variable
$q_\be$ for each simple reflection $s_\be$ in $W^P$. As a module over
$\Gamma\llbracket q \rrbracket$ we have $\QK_T(X) = K^T(X) \otimes_\Gamma
\Gamma\llbracket q \rrbracket$. The \emph{undeformed product} of two opposite
Schubert classes in $\QK_T(X)$ is defined by
\[
  [\cO_{X_v}] \odot [\cO_{X^u}] \,=\,
  \sum_{d \geq 0} \,q^d \ev_{3,*} [\cO_{M_d(X_v,X^u)}] \,.
\]
Let $\Psi : \QK_T(X) \to \QK_T(X)$ be the $\Gamma\llbracket q \rrbracket$-linear
map defined by
\[
  \Psi([\cO_{X^w}]) \,=\, \sum_{d \geq 0} \,q^d\, [\cO_{\Gamma_d(X^w)}] \,,
\]
where the curve neighborhood $\Gamma_d(X^w) = \ev_2(\ev_1^{-1}(X^w))$ is defined
using the evaluation maps from $M_d$. This curve neighborhood is a Schubert
variety in $X$ by \cite[Prop.~3.2(b)]{buch.chaput.ea:finiteness}, whose Weyl
group element was determined in \cite{buch.mihalcea:curve}. By
\cite[Prop.~2.3]{buch.chaput.ea:chevalley}, Givental's \emph{quantum $K$-theory
product} $\star$ is given by
\begin{equation}\label{eqn:qkproduct}%
  [\cO_{X_v}] \star [\cO_{X^u}] \,=\,
  \Psi^{-1}([\cO_{X_v}] \odot [\cO_{X^u}]) \,.
\end{equation}
The following conjecture is the $K$-theoretic analogue of the Seidel
multiplication formula \eqn{htseidel} in $\QH_T(X)$ proved in
\cite{chaput.manivel.ea:affine, chaput.perrin:affine}.

\begin{conj}\label{conj:qkseidel}%
  For $w \in W^\comin$ and $u \in W$ we have
  \[
    [\cO_{X_{w_0 w}}] \star [\cO_{X^u}] \,=\,
    q^{d(w,u)}\, [\cO_{w^{-1}.X^{w u}}]
    \text{ \ \ and \ \ }
    [\cO_{X^w}] \star [\cO_{w.X^u}] \,=\,
    q^{d(w,u)}\, [\cO_{X^{w u}}]
  \]
  in $\QK_T(X)$.
\end{conj}

The two identities in \Conjecture{qkseidel} are equivalent by \Lemma{dualpoint}.
The non-equivariant case of this conjecture was proved in
\cite[Cor.~3.7]{buch.chaput.ea:seidel} when $X$ is a cominuscule flag variety.
We will extend this result to equivariant quantum $K$-theory below, based on the
following conjectural generalization of \Theorem{seidelnbhd}. Recall that a
morphism $\pi : Z \to Y$ is called \emph{cohomologically trivial} if $\pi_*
\cO_Z = \cO_Y$ and $R^j \pi_* \cO_Z = 0$ for $j \geq 1$.

\begin{conj}\label{conj:seidelnbhd}%
  Let $w \in W^\comin$, $u \in W$, and let $e \in H_2(X,\Z)$ be
  effective.\smallskip

  \noin{\rm(a)} We have $\Gamma_{d(w,u)+e}(X_{w_0 w}, X^u) =
  \Gamma_e(w^{-1}.X^{w u})$.\smallskip

  \noin{\rm(b)} The evaluation map $\ev_3 : M_{d(w,u)+e}(X_{w_0 w}, X^u) \to
  \Gamma_{d(w,u)+e}(X_{w_0 w}, X^u)$ is cohomologically trivial.
\end{conj}

\Conjecture{seidelnbhd} is a variant of the quantum-equals-classical theorem for
Gromov-Witten invariants as stated in \cite[Thm.~4.1]{buch.chaput.ea:projected},
see also \cite[Thm.~1.2]{xu:quantum}. The conjecture is true for $e=0$; part (a)
is equivalent to \Theorem{seidelnbhd}, and part (b) holds because the map $\ev_3
: M_{d(w,u)}(X_{w_0 w}, X^u) \to \Gamma_{d(w,u)}(X_{w_0 w}, X^u)$ is birational
by \cite{belkale:transformation, chaput.manivel.ea:affine}, and
$M_{d(w,u)}(X_{w_0 w}, X^u)$ has rational singularities by
\cite[Cor.~3.1]{buch.chaput.ea:finiteness}. For $e \geq 0$, \Theorem{seidelnbhd}
implies that
\begin{equation}\label{eqn:seidelnbhd_incl}%
  \Gamma_e(w^{-1}.X^{w u}) \,=\, \Gamma_e(\Gamma_{d(w,u)}(X_{w_0 w}, X^u))
  \,\subset\, \Gamma_{d(w,u)+e}(X_{w_0 w}, X^u) \,,
\end{equation}
and $\Gamma_{d(w,u)+e}(X_{w_0 w}, X^u)$ is irreducible by
\cite[Cor.~3.8]{buch.chaput.ea:finiteness}. \Conjecture{seidelnbhd}(a) is
therefore true if and only if $\Gamma_{d(w,u)+e}(X_{w_0 w}, X^u)$ and
$\Gamma_e(X^{w u})$ have the same dimension. We prove below that
\Conjecture{seidelnbhd}(a) is true when $X = \GL(n)/P$ is any flag variety of
Lie type A. \Conjecture{qkseidel} follows from \Conjecture{seidelnbhd} by the
following observation.

\begin{lemma}\label{lemma:qkseidel_equiv}%
  Given $w \in W^\comin$ and $u \in W$, the identity $[\cO_{X_{w_0 w}}] \star
  [\cO_{X^u}] = q^{d(w,u)}\, [\cO_{w^{-1}.X^{w u}}]$ holds in $\QK_T(X)$ if and
  only if
  \begin{equation}\label{eqn:seidelpush}%
    \ev_{3,*}[\cO_{M_{d(w,u)+e}(X_{w_0 w}, X^u)}] \,=\,
    [\cO_{\Gamma_e(w^{-1}.X^{w u})}]
  \end{equation}
  holds in $K_T(X)$ for all effective degrees $e \in H_2(X,\Z)$.
\end{lemma}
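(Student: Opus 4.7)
The plan is to reduce the stated equality in $\QK_T(X)$ to one involving the undeformed product $\odot$ via \eqn{qkproduct}, and then compare coefficients of $q^d$ term by term. Applying $\Psi$ to both sides, the identity $[\cO_{X_{w_0w}}] \star [\cO_{X^u}] = q^{d(w,u)}[\cO_{w^{-1}.X^{wu}}]$ in $\QK_T(X)$ is equivalent to
\[
  [\cO_{X_{w_0w}}] \odot [\cO_{X^u}] \,=\, \Psi\bigl(q^{d(w,u)}[\cO_{w^{-1}.X^{wu}}]\bigr) .
\]

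The LHS equals $\sum_{d \geq 0} q^d \ev_{3,*}[\cO_{M_d(X_{w_0w}, X^u)}]$ by the definition of $\odot$. For the RHS, $w^{-1}.X^{wu}$ is a Schubert variety with respect to the conjugated Borel $w^{-1}B^-w$, and the defining formula for $\Psi$ extends to such translated Schubert classes, giving $\Psi([\cO_{w^{-1}.X^{wu}}]) = \sum_{e \geq 0} q^e [\cO_{\Gamma_e(w^{-1}.X^{wu})}]$. Multiplying by $q^{d(w,u)}$ and matching coefficients of $q^d$ in the two power series then yields the equivalence.

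For $d = d(w,u)+e$ with $e$ an effective degree, the coefficient-matching statement is precisely \eqn{seidelpush}. For an effective degree $d$ with $d - d(w,u)$ not effective, both sides must vanish. On the moduli side, $M_d(X_{w_0w}, X^u) = \emptyset$: by \cite{fulton.woodward:quantum, buch.chung.ea:euler}, $d(w,u)$ is the unique minimal degree (in the partial order induced by the effective cone) for which this moduli space is non-empty, so every effective $d$ with $M_d(X_{w_0w}, X^u) \neq \emptyset$ satisfies $d - d(w,u)$ effective.

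The main step requiring care is the formula for $\Psi$ on the translated class $[\cO_{w^{-1}.X^{wu}}]$. Since $\Psi$ is originally defined on the $B^-$-Schubert basis, one must verify that its action on this translated class retains the curve-neighborhood form. This should follow by conjugating by $w^{-1}$: the variety $w^{-1}.X^{wu}$ is a standard opposite Schubert variety with respect to $w^{-1}B^-w$, and curve neighborhoods $\Gamma_e(Y) = \ev_2(\ev_1^{-1}(Y))$ are intrinsic to the $G$-variety $X$ and independent of the choice of Borel subgroup entering into the definition of $\Psi$, so the extension is natural.
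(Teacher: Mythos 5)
Your argument is essentially the paper's: both reduce the $\star$-identity and the family of pushforward identities \eqn{seidelpush} to a single identity for the undeformed product $\odot$, via \eqn{qkproduct} and the vanishing $M_d(X_{w_0 w},X^u)=\emptyset$ for $d-d(w,u)$ non-effective. The one subtlety you flag --- that $\Psi$ applied to the translated class $[\cO_{w^{-1}.X^{w u}}]$ is still given by the curve-neighborhood formula --- is indeed implicit in the paper's one-line proof, and your justification would be cleaner if made precise: $\Psi$ agrees with the basis-free operator $\sum_d q^d\, \ev_{2,*}\ev_1^*$ on $M_d = \ov{M}_{0,2}(X,d)$, which commutes with pushforward along the $G$-automorphism $x \mapsto w^{-1}.x$ of $X$, giving $\Psi([\cO_{w^{-1}.X^{w u}}]) = \sum_e q^e [\cO_{\Gamma_e(w^{-1}.X^{w u})}]$.
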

\begin{proof}
  Both assertions are equivalent to the identity
  \[
    [\cO_{X_{w_0 w}}] \odot [\cO_{X^u}] \,=\,
    \sum_{e \geq 0} q^{d(w,u)+e}\, [\cO_{\Gamma_e(w^{-1}.X^{w u})}]
  \]
  by the definition \eqn{qkproduct} of the quantum product in $\QK_T(X)$.
\end{proof}

\begin{thm}\label{thm:qkseidel_comin}%
  \Conjecture{qkseidel} and \Conjecture{seidelnbhd} are true when $X$ is a
  cominuscule flag variety.
\end{thm}
\begin{proof}
  Assume that $X$ is cominuscule. Then \Conjecture{seidelnbhd}(b) is a special
  case of \cite[Thm.~4.1]{buch.chaput.ea:projected}, and
  \Conjecture{seidelnbhd}(a) follows from \Theorem{seidelnbhd} and
  \cite[Cor.~8.24]{buch.chaput.ea:positivity}, noting that $q^{d(w,u)}$ is the
  maximal power of $q$ occurring in the quantum cohomology product $[X_{w_0 w}]
  \star [X^u]$ by \cite{belkale:transformation, chaput.manivel.ea:affine}. This
  proves \Conjecture{seidelnbhd}, which implies \Conjecture{qkseidel} by
  \Lemma{qkseidel_equiv}.
\end{proof}

We finish this section by proving that \Conjecture{seidelnbhd}(a) can be reduced
to the case where $X$ is a flag variety of Picard rank 1. In particular,
\Conjecture{seidelnbhd}(a) follows from \Theorem{qkseidel_comin} in type A.
These results were proved for $e=0$ in \cite{tarigradschi:curve}. We thank
Mihail Tarigradschi for suggesting that his methods might apply to the general
case of our conjecture.

Recall that $X = G/P$. Let $Q_1, Q_2 \subset G$ be parabolic subgroups such that
$P = Q_1 \cap Q_2$. Set $Y_i = G/Q_i$ and let $\pi_i : X \to Y_i$ be the
projection, for $i \in \{1,2\}$. Given a degree $d \in H_2(X,\Z)$, we also let
$d$ denote the image $\pi_{i,*}(d)$ of this degree in $H_2(Y_i,\Z)$. Let
$\Gamma_d(Y_{i,v},Y_i^u) \subset Y_i$ be the union of all stable curves of
degree $d$ in $Y_i$ that connect the Schubert varieties $Y_{i,v} = \pi_i(X_v)$
and $Y_i^u = \pi_i(X^u)$, for $u,v \in W$. The following result generalizes
\cite[Thm.~2.6.1]{bjorner.brenti:combinatorics} and
\cite[Lemma~4]{tarigradschi:curve}.

\begin{lemma}\label{lemma:nbhd_proj}%
  We have $\Gamma_d(X^u) = \pi_1^{-1}(\Gamma_d(Y_1^u)) \cap
  \pi_2^{-1}(\Gamma_d(Y_2^u))$.
\end{lemma}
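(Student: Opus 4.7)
The plan is to prove the two inclusions separately. The easy direction follows by pushing forward stable maps along $\pi_i$; for the reverse direction, I would establish $T$-convexity of both sides and reduce to a combinatorial check on $T$-fixed points.

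For the inclusion $\Gamma_d(X^u) \subset \pi_1^{-1}(\Gamma_d(Y_1^u)) \cap \pi_2^{-1}(\Gamma_d(Y_2^u))$, let $y \in \Gamma_d(X^u)$, realized by a genus-zero stable map $f \colon C \to X$ of degree $d$ connecting $X^u$ to $y$. The composition $\pi_i \circ f$, stabilized if necessary, is a stable map to $Y_i$ of degree $\pi_{i,*}(d) = d$ from $Y_i^u$ to $\pi_i(y)$, so $\pi_i(y) \in \Gamma_d(Y_i^u)$.

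For the reverse inclusion, I first observe that both sides are $T$-convex. By \cite[Prop.~3.2(b)]{buch.chaput.ea:finiteness}, $\Gamma_d(X^u)$ is a Schubert variety in $X$ and hence $T$-convex by \Theorem{rigidschub}. For the right hand side, write $\Gamma_d(Y_i^u) = Y_i^{v_i}$ with $v_i \in W^{Q_i}$; since $\pi_i$ is a fibration with connected fibers $Q_i/P$, the preimage $\pi_i^{-1}(Y_i^{v_i})$ is an irreducible, closed, $B^-$-stable subvariety of $X$ of dimension $\dim X - \ell(v_i)$, and (using $W^{Q_i} \subset W^P$) this must equal the Schubert variety $X^{v_i}$. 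Hence the right hand side is the intersection $X^{v_1} \cap X^{v_2}$ of two $T$-convex Schubert varieties, which is itself $T$-convex.

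Writing $\Gamma_d(X^u) = X^{u'}$ with $u' \in W^P$, the $T$-convexity of $\Gamma_d(X^u)$ reduces the remaining inclusion $X^{v_1} \cap X^{v_2} \subset X^{u'}$ to a statement on $T$-fixed points: for every $w \in W^P$ satisfying $w \geq v_i$ in Bruhat order for both $i$, one must show $w \geq u'$. Equivalently, $u'$ is the join of $v_1$ and $v_2$ in the Bruhat order on $W^P$. When $d = 0$, this is precisely the Bjorner--Brenti splitting \cite[Thm.~2.6.1]{bjorner.brenti:combinatorics}, valid because $W_P = W_{Q_1} \cap W_{Q_2}$. The main obstacle is extending this join identity to arbitrary degrees; I would tackle this using the explicit description of the Weyl elements $u'$ and $v_i$ from \cite{buch.mihalcea:curve}, following the strategy used by \cite[Lemma~4]{tarigradschi:curve} to handle the Seidel case.
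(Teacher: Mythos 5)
Your structural outline matches the paper's: both reduce to comparing $T$-fixed points, using that both sides are $B^-$-stable (hence unions of Schubert varieties, hence $T$-convex). But you stop at the crucial step. You reduce the lemma to the combinatorial claim that $u'$ (with $\Gamma_d(X^u) = X^{u'}$) is the join of $v_1$ and $v_2$ in Bruhat order, and then say you ``would tackle'' this for general $d$ using explicit descriptions from \cite{buch.mihalcea:curve}. That is exactly where the content of the lemma lies, and it is not carried out. (Your easy inclusion, pushing stable maps forward along $\pi_i$, and the identification $\pi_i^{-1}(Y_i^{v_i}) = X^{v_i}$ for $v_i \in W^{Q_i} \subset W^P$, are both fine.)

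The paper closes this gap without ever computing joins. It observes that $w.P \in \Gamma_d(X^u)$ if and only if $d \geq \dist_X(X_w, X^u)$, where $\dist_X(X_w, X^u)$ is the minimal connecting degree, and similarly for each $Y_i$. Then it invokes \cite[Thm.~5]{buch.chung.ea:euler}, which says precisely that when $P = Q_1 \cap Q_2$, the degree $\dist_X(X_w,X^u)$ is the unique effective class projecting to $\dist_{Y_i}(Y_{i,w},Y_i^u)$ in $H_2(Y_i,\Z)$ for $i = 1,2$. From this, $d \geq \dist_X(X_w,X^u)$ if and only if $\pi_{i,*}(d) \geq \dist_{Y_i}(Y_{i,w},Y_i^u)$ for both $i$, which is exactly the fixed-point identity you need. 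So the missing ingredient in your argument is not a Weyl-element computation à la \cite{buch.mihalcea:curve} but the distance-splitting theorem of \cite{buch.chung.ea:euler}; with that reference in hand, your framework would go through. Without it, the proposal is incomplete: the ``main obstacle'' you flag is not an optional refinement but the heart of the proof.
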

\begin{proof}
  Let $\dist_X(X_v,X^u)$ denote the unique minimal degree of a rational curve in
  $X$ connecting $X_v$ and $X^u$. It follows from
  \cite[Thm.~5]{buch.chung.ea:euler} that this degree is uniquely determined by
  $\pi_{i,*}(\dist_X(X_v,X^u)) = \dist_{Y_i}(Y_{i,v},Y_i^u)$ for $i \in
  \{1,2\}$. Using that $v.P \in \Gamma_d(X^u)$ holds if and only if $d \geq
  \dist_X(X_v,X^u)$, we deduce that $\Gamma_d(X^u)$ and
  $\pi_1^{-1}(\Gamma_d(Y_1^u)) \cap \pi_2^{-1}(\Gamma_d(Y_2^u))$ contain the
  same $T$-fixed points. The lemma follows from this, as both sets are
  $B^-$-stable subvarieties of $X$.
\end{proof}

The following result implies that \Conjecture{seidelnbhd}(a) follows from the
case where $X$ has Picard rank 1. It was proved for $e=0$ in
\cite[Thm.~3]{tarigradschi:curve}.

\begin{thm}\label{thm:seidelnbhd_reduce}%
  Let $X = G/P$, $Y_1 = G/Q_1$, and $Y_2 = G/Q_2$ be flag varieties such that $P
  = Q_1 \cap Q_2$. Let $w \in W^\comin$, $u \in W$, and let $e \in H_2(X,\Z)$ be
  any effective degree. If $\Gamma_{d(w,u)+e}(Y_{i,w_0 w},Y_i^u) =
  \Gamma_e(w^{-1}.Y_i^{w u})$ holds for $i \in \{1,2\}$, then
  $\Gamma_{d(w,u)+e}(X_{w_0 w},X^u) = \Gamma_e(w^{-1}.X^{w u})$.
\end{thm}
\begin{proof}
  The assumptions and \Lemma{nbhd_proj} imply that
  \[
    \begin{split}
      \Gamma_{d(w,u)+e}(X_{w_0 w},X^u)
      &\subset
      \pi_1^{-1}(\Gamma_{d(w,u)+e}(Y_{1,w_0 w},Y_1^u)) \cap
      \pi_2^{-1}(\Gamma_{d(w,u)+e}(Y_{2,w_0 w},Y_2^u)) \\
      &=
      \pi_1^{-1}(\Gamma_e(w^{-1}.Y_1^{w u})) \cap
      \pi_2^{-1}(\Gamma_e(w^{-1}.Y_2^{w u}))
      =
      \Gamma_e(w^{-1}.X^{w u}) \,,
    \end{split}
  \]
  and the opposite inclusion holds by \eqn{seidelnbhd_incl}.
\end{proof}

\begin{cor}
  \Conjecture{seidelnbhd}(a) is true when $X = \GL(n)/P$ has Lie type A.
\end{cor}
\begin{proof}
  This follows from \Theorem{qkseidel_comin} and \Theorem{seidelnbhd_reduce},
  noting that all flag varieties of type A with Picard rank 1 are Grassmannians,
  and therefore cominuscule.
\end{proof}

% !TeX root=equivrigid.tex

\section{Horospherical varieties of Picard rank 1}\label{sec:horospherical}%

In this section we interpret \Theorem{rigid} and \Proposition{bb-fpi} for a
class of horospherical varieties that includes all non-singular projective
horospherical varieties of Picard rank 1 (except flag varieties) by Pasquier's
classification \cite{pasquier:some}. Let $G$ be a connected reductive linear
algebraic group, $B \subset G$ a Borel subgroup, and $T \subset B$ a maximal
torus. Let $V_1$ and $V_2$ be irreducible rational representations of $G$, and
let $v_i \in V_i$ be a highest weight vector of weight $\la_i$, for $i \in
\{1,2\}$. We assume that $\la_1 \neq \la_2$. Define
\[
  X = \ov{G.[v_1+v_2]} \subset \bP(V_1\oplus V_2) \,.
\]
If $X$ is normal, then $X$ is a horospherical variety of rank 1, see
\cite[Ch.~7]{timashev:homogeneous}. We will assume that $X$ is non-singular and
$\bK = \C$, even though many claims hold more generally; this implies that $X$
is fibered over a flag variety $G/P_{12}$ with non-singular horospherical fibers
of Picard rank 1, see \Remark{pasfib}. Any $G$-translate of a $B$-orbit closure
in $X$ will be called a \emph{Schubert variety}. Our next result uses the action
of $T \times \bG_m$ on $X$ defined by $(t,z).[u_1+u_2] = t.[u_1 + z u_2]$, for
$u_i \in V_i$. We have $X^{T \times \bG_m} = X^T$, and a Schubert variety is
$T$-stable if and only if it is $T \times \bG_m$-stable.

\begin{thm}\label{thm:horo}%
  Any $T$-stable Schubert variety in $X$ is $T \times \bG_m$-convex and $T
  \times \bG_m$-equivariantly rigid.
\end{thm}

Before proving \Theorem{horo}, we sketch elementary proofs of some basic facts
about $X$, which are also consequences of general results about spherical
varieties, see \cite{timashev:homogeneous, perrin:geometry, pasquier:some} and
the references therein.

Given an element $[u_1+u_2] \in \bP(V_1\oplus V_2)$, we will always assume $u_i
\in V_i$, and $i$ will always mean an element from $\{1,2\}$. We consider
$\bP(V_i)$ as a subvariety of $\bP(V_1\oplus V_2)$. Let $\pi_i : \bP(V_1\oplus
V_2) \ssm \bP(V_{3-i}) \to \bP(V_i)$ denote the projection from $V_{3-i}$,
defined by $\pi_i([u_1+u_2]) = [u_i]$. Set $X_0 = G.[v_1+v_2] \subset
\bP(V_1\oplus V_2)$, $X_i = G.[v_i] \subset \bP(V_i)$, and $X_{12} =
G.([v_1],[v_2]) \subset \bP(V_1) \times \bP(V_2)$. Since $v_i$ is a highest
weight vector, the stabilizer $P_i = G_{[v_i]}$ is a parabolic subgroup
containing $B$. It follows that $X_i \cong G/P_i$ and $X_{12} \cong G/(P_1\cap
P_2)$ are flag varieties. In particular, $X_i$ is closed in $\bP(V_i)$, and
$X_{12}$ is closed in $\bP(V_1)\times \bP(V_2)$. Notice also that $X_0 \cong
G/H$, where $H \subset P_1 \cap P_2$ is the kernel of the character $\la_1-\la_2
: P_1\cap P_2 \to \bG_m$. This shows that $X_0$ is a $\bG_m$-bundle over
$X_{12}$, so $X$ is a non-singular projective horospherical variety of rank 1
(but not necessarily of Picard rank 1, see \Remark{pasfib}).

Let $W$ be the Weyl group of $G$, and recall the notation from
\Section{schubert}.

\begin{lemma}\label{lemma:orbits}%
  We have $X = X_0 \cup X_1 \cup X_2$. The $B$-orbit closures in $X$ are
  \[
    \begin{split}
      \ov{B w.[v_i]} \, &= \bigcup_{w' \leq w} B w'.[v_i]
      \text{ \ \ for $w \in W^{P_i}$ and $i \in \{1,2\}$, and} \\
      \ov{B w.[v_1+v_2]} \, &=
      \bigcup_{w' \leq w} \left(
        B w'.[v_1+v_2] \cup B w'.[v_1] \cup B w'.[v_2] \right)
      \text{\ \ for $w \in W^{P_1 \cap P_2}$.}
    \end{split}
  \]
\end{lemma}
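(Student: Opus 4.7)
The plan is to first establish the $G$-orbit decomposition and then describe the $B$-orbits and their closures inside each $G$-orbit. The key tool throughout is the $\bG_m$-action on $\bP(V_1\oplus V_2)$ given by $z.[u_1+u_2]=[u_1+zu_2]$. Since $\la_1\neq\la_2$, I can choose a cocharacter $\rho:\bG_m\to T$ with $n:=\langle\la_2-\la_1,\rho\rangle\neq 0$, and then $\rho(t).[v_1+v_2]=[v_1+t^n v_2]\in X_0$ shows that this $\bG_m$-action preserves $X_0$ and hence $X$. The limits $z\to 0,\infty$ then yield $[v_1],[v_2]\in X$, so $X_1,X_2\subset X$. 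For the reverse containment I would invoke that $X$ is a smooth rank-1 horospherical $G$-variety, so by Luna--Vust theory its $G$-orbits beyond the open orbit $X_0$ are precisely the two closed orbits; elementarily, any $[u_1+u_2]\in X$ with both $u_i\neq 0$ has stabilizer conjugate to $H$ by upper semicontinuity and hence lies in $X_0$.

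For the $B$-orbits on $X_i\cong G/P_i$ with $i\in\{1,2\}$, the description and closure formula are just the standard Bruhat decomposition. For $X_0\cong G/H$, the inclusion $H\subset P_1\cap P_2$ makes $\pi:X_0\to X_{12}:=G/(P_1\cap P_2)$ a $B$-equivariant $\bG_m$-bundle. Over the $T$-fixed point $w.(P_1\cap P_2)\in X_{12}$, the torus $T$ acts on the fiber $(P_1\cap P_2)/H\cong\bG_m$ through the character $t\mapsto(\la_1-\la_2)(w^{-1}tw)$, which is non-zero on $T$ because $\la_1\neq\la_2$, and hence surjects onto $\bG_m$. Therefore $T$ (and a fortiori $B$) acts transitively on each fiber, so $Bw.[v_1+v_2]=\pi^{-1}(Bw.(P_1\cap P_2))$ is a single $B$-orbit; these exhaust the $B$-orbits of $X_0$, indexed by $w\in W^{P_1\cap P_2}$.

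To compute the closure of $Bw.[v_1+v_2]$ in $X$, I would first note that its closure inside $X_0$ is $\pi^{-1}(\ov{Bw.(P_1\cap P_2)})=\bigcup_{w'\le w}Bw'.[v_1+v_2]$ by applying $\pi^{-1}$ to the standard Bruhat closure on $X_{12}$. Applying the $\bG_m$-limits $z\to 0,\infty$ to each $bw'.[v_1+v_2]$ produces $bw'.[v_1]$ and $bw'.[v_2]$ respectively, placing every $Bw'.[v_i]$ with $w'\le w$ inside the closure. That this accounts for the whole closure follows from irreducibility of $\ov{Bw.[v_1+v_2]}$ and a dimension count: the intersection with each $X_i$ must be the codimension-one Schubert subvariety $\ov{Bw^{P_i}.[v_i]}\subset X_i$, which by compatibility of the Bruhat orders on $W^{P_1\cap P_2}$ and $W^{P_i}$ coincides with $\bigcup_{w'\le w,\,w'\in W^{P_1\cap P_2}}Bw'.[v_i]$.

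The main obstacle I expect is the first step: rigorously verifying the orbit decomposition $X=X_0\cup X_1\cup X_2$ without leaning on the general theory of spherical/horospherical varieties. All remaining steps are direct consequences of standard Bruhat theory on the flag variety $X_{12}$, the $\bG_m$-bundle structure $\pi$, and the $\bG_m$-limit degenerations into $X_1$ and $X_2$.
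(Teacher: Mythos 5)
There is a genuine gap in the closure computation. You establish the inclusion $\bigcup_{w'\le w}\bigl(Bw'.[v_1+v_2]\cup Bw'.[v_1]\cup Bw'.[v_2]\bigr)\subset\ov{Bw.[v_1+v_2]}$ correctly via $\bG_m$-limits, but the reverse inclusion is not justified by your dimension count. You assert that $\ov{Bw.[v_1+v_2]}\cap X_i$ ``must be the codimension-one Schubert subvariety $\ov{Bw^{P_i}.[v_i]}$,'' but $X_i$ is not a divisor in $X$: one has $\codim(X_i,X)=1+\dim P_i/(P_1\cap P_2)$, which exceeds $1$ whenever $P_1\cap P_2\subsetneq P_i$. (For instance, with $G=\SL_3$, $V_1=\C^3$, $V_2=\bigwedge^2\C^3$, one gets $X=Q^4\subset\bP^5$ with $X_i\cong\bP^2$ of codimension $2$; for $w=s_1s_2$ the closure $\ov{Bw.[v_1+v_2]}$ is $3$-dimensional while $\ov{Bw^{P_1}.[v_1]}=\ov{Bs_1.[v_1]}$ is only $1$-dimensional.) So nothing in your argument rules out that the closure picks up larger Schubert cells in $X_i$. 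The paper closes this gap by observing that the projection $\pi_i$ is defined on $\ov{Bw.[v_1+v_2]}\ssm X_{3-i}$ and restricts to the identity on $X_i$; continuity gives $\pi_i\bigl(\ov{Bw.[v_1+v_2]}\ssm X_{3-i}\bigr)\subset\ov{Bw.[v_i]}$, which immediately yields the needed upper bound on $\ov{Bw.[v_1+v_2]}\cap X_i$.

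On the decomposition $X=X_0\cup X_1\cup X_2$: the Luna--Vust invocation is legitimate but heavy, while your proposed ``elementary'' fallback is not correct as stated---upper semicontinuity controls only the dimension of stabilizers, not their conjugacy type, and by itself does not exclude additional $G$-orbits with both projections nonzero. The paper's self-contained argument is shorter: since $\la_1\ne\la_2$, the torus closure $\ov{T.[v_1+v_2]}$ is the whole line through $[v_1]$ and $[v_2]$, which implies $X_0=(\pi_1\times\pi_2)^{-1}(X_{12})$ on $\bP_0:=\bP(V_1\oplus V_2)\ssm(\bP(V_1)\cup\bP(V_2))$; as $X_{12}$ is a flag variety and hence closed, $X_0$ is closed in $\bP_0$, giving $X\cap\bP_0=X_0$, and continuity of $\pi_i$ forces $X\cap\bP(V_i)=X_i$.
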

\begin{proof}
  Set $\bP_0 = \bP(V_1\oplus V_2) \ssm (\bP(V_1) \cup \bP(V_2))$. Since $\la_1
  \neq \la_2$, it follows that $\ov{T.[v_1+v_2]}$ is the line through $[v_1]$
  and $[v_2]$ in $\bP(V_1\oplus V_2)$. This implies $X_0 = (\pi_1 \times
  \pi_2)^{-1}(X_{12})$, hence $X_0$ is closed in $\bP_0$, and $X_0 = X \cap
  \bP_0$. We also have $X_i \subset X \cap \bP(V_i) \subset \pi_i^{-1}(X_i) \cap
  \bP(V_i) = X_i$, which proves the first claim. To finish the proof, it
  suffices to show $w'.[v_i] \in \ov{Bw.[v_1+v_2]}$ if and only if $w' \leq w$
  (when $w' \in W^{P_i})$. The implication `if' holds because $w'.[v_i] \in
  \ov{Tw'.[v_1+v_2]}$, and `only if' holds because $\pi_i(\ov{Bw.[v_1+v_2]} \ssm
  X_{3-i}) \subset \ov{Bw.[v_i]}$.
\end{proof}

Define an alternative action of $P_i$ on $V_{3-i}$ by $p \bullet u =
\la_i(p)^{-1} p.u$, and use this action to form the space
\[
  G \times^{P_i} V_{3-i} \ = \ \{[g,u] : g \in G, u \in V_{3-i}\} \ / \
  \{[g p,u] = [g, p \bullet u] : p \in P_i \} \,.
\]
Define a morphism of varieties $\phi_i : G \times^{P_i} V_{3-i} \to
\bP(V_1\oplus V_2)$ by $\phi_i([g,u]) = g.[v_i+u]$. This is well defined since
$p.(v_i+u) = \la_i(p) (v_i + p \bullet u)$ holds for $p \in P_i$ and $u \in
V_{3-i}$. Set $E_i = (P_i \bullet v_{3-i}) \cup \{0\} \subset V_{3-i}$. Noting
that $E_i$ is the cone over $P_i.[v_{3-i}] \cong P_i/(P_1 \cap P_2)$, it follows
that $E_i$ is closed in $V_{3-i}$.

\begin{lemma}\label{lemma:vb}%
  The restricted map $\phi_i : G \times^{P_i} E_i \to X_0 \cup X_i$ is an
  isomorphism of varieties. In particular, $E_i \subset V_{3-i}$ is a linear
  subspace.
\end{lemma}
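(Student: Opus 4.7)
The plan is to verify $\phi_i$ is a bijective birational morphism to the smooth variety $X_0 \cup X_i$, invoke Zariski's Main Theorem to upgrade this to an isomorphism, and then deduce linearity of $E_i$ from the fact that a smooth irreducible cone in a vector space is a linear subspace. The main obstacle is the step from bijection of sets to isomorphism of schemes; this is where normality of the target becomes decisive.

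First I would verify the image of $\phi_i$ lies in $X_0 \cup X_i$. The case $u = 0$ gives $\phi_i([g,0]) = g.[v_i] \in X_i$. For $u = p \bullet v_{3-i}$ with $p \in P_i$, the identity
\[
  p.(v_i + v_{3-i}) \,=\, \la_i(p)\, v_i + p.v_{3-i}
  \,=\, \la_i(p)\bigl(v_i + (p \bullet v_{3-i})\bigr)
\]
yields $g.[v_i + u] = gp.[v_1 + v_2] \in X_0$. The same identity gives surjectivity of $\phi_i$ onto $X_0 \cup X_i$. For injectivity, suppose $\phi_i([g,u]) = \phi_i([g',u'])$ and set $h = g^{-1} g'$, so $h.(v_i + u') = c(v_i + u)$ for some nonzero $c \in \bK$. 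Since $V_i$ and $V_{3-i}$ are $G$-stable summands, matching components gives $h.v_i = c v_i$ and $h.u' = c u$. The first forces $h \in P_i$ with $c = \la_i(h)$, and then $h \bullet u' = u$, so $[g',u'] = [gh,u'] = [g, h\bullet u'] = [g,u]$ in $G \times^{P_i} E_i$. The degenerate case $u = 0 \neq u'$ is ruled out because $\bP(V_i) \cup \bP(V_{3-i})$ is $G$-stable.

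The crucial step is upgrading this bijection to an isomorphism. I would observe that the restriction of $\phi_i$ to the open subset $G \times^{P_i}(E_i \setminus \{0\}) = G \times^{P_i}(P_i \bullet v_{3-i}) \cong G/H$, where $H$ is the $\bullet$-stabilizer of $v_{3-i}$ in $P_i$, maps isomorphically onto $X_0$. A brief calculation identifies $H$ with $\ker(\la_1 - \la_2 : P_1 \cap P_2 \to \bG_m)$, which is precisely the stabilizer of $[v_1 + v_2]$ in $G$, confirming the identification $X_0 \cong G/H$. Thus $\phi_i$ is birational onto the smooth, hence normal, variety $X_0 \cup X_i$. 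Zariski's Main Theorem then lets us factor $\phi_i$ as an open embedding followed by a finite morphism, and the finite map is birational onto a normal target, hence an isomorphism; so $\phi_i$ is an open embedding, and being bijective, an isomorphism.

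Finally, since $G \to G/P_i$ is Zariski-locally trivial and the source $G \times^{P_i} E_i$ is now known to be smooth (being isomorphic to the open subset $X_0 \cup X_i$ of the smooth variety $X$), the fiber $E_i$ of this bundle must itself be smooth. It is irreducible as the closure of the $P_i$-orbit $P_i \bullet v_{3-i}$, and it is a cone: for any nonzero $c$, there exists $t \in T \subset P_i$ with $(\la_{3-i} - \la_i)(t) = c$, since $\la_{3-i} - \la_i$ is a nontrivial character of $T$ and hence a surjection onto $\bG_m$; then $t \bullet v_{3-i} = c\, v_{3-i}$, and scaling extends to all of $P_i \bullet v_{3-i}$ by the same trick. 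A smooth irreducible cone in a vector space coincides with its tangent space at the origin, so $E_i \subset V_{3-i}$ is a linear subspace.
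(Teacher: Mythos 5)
Your proof is correct and follows the same strategy as the paper: establish bijectivity of $\phi_i$ by direct computation and invoke Zariski's main theorem against the non-singular (hence normal) target, then deduce linearity of $E_i$. You fill in several details the paper leaves implicit — explicit surjectivity, the birational identification of $G\times^{P_i}(E_i \ssm\{0\})$ with $G/H \cong X_0$, and the ``smooth irreducible cone is linear'' argument for $E_i$ — but the underlying approach is identical. (One small remark: the ``degenerate case $u=0 \neq u'$'' is already handled by the component-matching step, since $h.u'=0$ forces $u'=0$; no separate argument is needed.)
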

\begin{proof}
  Assume $\phi_i([g,u]) = \phi_i([g',u'])$, and set $p = g^{-1} g'$. We obtain
  $p \in P_i$ and $[v_i + u] = p.[v_i + u'] = [v_i + p\bullet u']$ in
  $\bP(V_1\oplus V_2)$, hence $[g,u] = [g,p\bullet u'] = [g p, u'] = [g', u']$
  in $G \times^{P_i} V_{3-i}$. We deduce that $\phi_i : G \times^{P_i} E_i \to
  X_0 \cup X_i$ is bijective, so the lemma follows from Zariski's main theorem,
  using that $X_0 \cup X_i$ is non-singular.
\end{proof}

Fix a strongly dominant cocharacter $\rho : \bG_m \to T$. For $a \in \Z$, define
$\rho_a : \bG_m \to T \times \bG_m$ by $\rho_a(z) = (\rho(z), z^a)$. The
resulting action of $\bG_m$ on $X$ is given by $\rho_a(z).[u_1+u_2] =
\rho(z).[u_1 + z^a u_2]$.

\begin{lemma}\label{lemma:horo_definite}%
  All $T$-fixed points in $X$ are fully definite for the action of
  $T \times \bG_m$.
\end{lemma}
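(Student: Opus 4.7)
The plan is to decompose the tangent space at each $T$-fixed point using the vector bundle structure of \Lemma{vb}, and then to combine the flag-variety fully-definite property with the extra $\bG_m$-factor to absorb the fiber directions. As a first step I would verify $X^T \subset X_1 \cup X_2$. The stabilizer of $[v_1+v_2]$ in $G$ is $H = \ker(\la_1 - \la_2 : P_1 \cap P_2 \to \bG_m)$, which has codimension one in $P_1 \cap P_2$; since $\la_1 \neq \la_2$, the intersection $H \cap T$ has codimension one in $T$, so $H$ contains no maximal torus of $G$. The same then holds for every conjugate $gHg^{-1}$, and since $g.[v_1+v_2]$ is $T$-fixed iff $T \subset gHg^{-1}$, no point of $X_0$ is $T$-fixed.

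Fix $p \in X_i^T$ for some $i \in \{1,2\}$. By \Lemma{vb}, the open subset $X_0 \cup X_i$ is a $G$-equivariant vector bundle over $X_i$ with fiber $E_i$, and $X_i$ sits as the zero section. This gives a $T \times \bG_m$-equivariant decomposition
\[
  T_p X \,=\, T_p X_i \oplus F_p \,,
\]
where $F_p$ is the fiber at $p$. The $\bG_m$-action on $X_i$ is trivial (on $X_1$ because $\bG_m$ fixes $V_1$, on $X_2$ because the scalar cancels in projective space), so every weight of $T_pX_i$ has $\bG_m$-component zero. On the other hand, computing tangent weights in $\bP(V_1\oplus V_2)$ at $p$ directly shows that every weight of $F_p$ has $\bG_m$-component $\epsilon_i := (-1)^{i+1} \in \{+1,-1\}$; this value is the same at every $p \in X_i^T$ because the $G$-action and the $\bG_m$-action on $V_1 \oplus V_2$ commute.

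To finish, pick a cocharacter $\tau_p : \bG_m \to T$ making $T_pX_i$ fully definite, which exists by the argument used in the proof of \Theorem{rigidschub} since $X_i$ is a flag variety. Because $F_p$ contributes only finitely many $T$-weights, I can then choose an integer $a$ so that $a\epsilon_i$ is large enough to satisfy $\langle \nu, \tau_p \rangle + a \epsilon_i > 0$ for every $T$-weight $\nu$ of $F_p$. The cocharacter $(\tau_p, a) : \bG_m \to T \times \bG_m$ then pairs strictly positively with every $T \times \bG_m$-weight of $T_pX$, showing $p$ is fully definite. The main delicate point is tracking the sign $\epsilon_i$: the required sign of $a$ depends on which flag variety $X_i$ contains $p$, which is precisely why the auxiliary $\bG_m$-factor is genuinely needed---a cocharacter of $T$ alone cannot in general place the $T$-weights of $F_p$ in the same half-space as the tangent weights of $X_i$.
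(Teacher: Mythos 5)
Your proposal is correct and follows essentially the same strategy as the paper: both use the vector bundle decomposition of \Lemma{vb} to split $T_pX = T_pX_i \oplus F_p$ and exploit the auxiliary $\bG_m$-factor to push the fiber weights into the chosen half-space, with the sign of the $\bG_m$-component of $a$ depending on whether $p$ lies in $X_1$ or $X_2$. The only cosmetic difference is that you argue directly at an arbitrary $p \in X_i^T$ (adding an explicit stabilizer argument that $X_0^T = \emptyset$, which the paper leaves implicit), whereas the paper checks the single points $[v_1]$ and $[v_2]$ using the fixed strongly dominant $\rho$ and then transports the conclusion by the Weyl group action.
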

\begin{proof}
  \Lemma{vb} shows that $[v_1]$ has a $T\times \bG_m$-stable open neighborhood
  in $X$ isomorphic to $B^-.[v_1] \times E_1$, where the action is given by
  $(t,z).(x,u) = (t.x, t\bullet z u)$. If $a$ is sufficiently negative, then
  $\bG_m$ acts through $\rho_a$ on $T_{[v_1]}X = T_{[v_1]}X_1 \oplus E_1$ with
  strictly negative weights, hence $[v_1]$ is fully definite in $X$ for the
  action of $T \times \bG_m$. A symmetric argument shows that $[v_2]$ is fully
  definite. The result follows from this, since all $T$-fixed points in $X$ are
  obtained from $[v_1]$ or $[v_2]$ by the action of the Weyl group $W$.
\end{proof}

\begin{proof}[Proof of \Theorem{horo}]
  For $a$ sufficiently negative, it follows from \Lemma{flagvar} that the
  Bialynicki-Birula cells of $X$ defined by $\rho_a$ are
  \[
    X^+_{w.[v_1]} = B w.[v_1]
    \text{ \ \ \ and \ \ \ }
    X^+_{w.[v_2]} = B w.[v_1+v_2] \cup B w.[v_2] \,.
  \]
  These cells form a stratification of $X$ by \Lemma{orbits}, so
  \Proposition{bb-fpi} implies that $\ov{B w.[v_1]}$ and $\ov{B w.[v_1+v_2]}$
  are $T \times \bG_m$-convex for $w \in W$. A symmetric argument applies to
  $\ov{B w.[v_2]}$, hence all $T$-stable Schubert varieties in $X$ are $T \times
  \bG_m$-convex by \Lemma{stable-trans}. The result now follows from
  \Theorem{rigid} and \Lemma{horo_definite}.
\end{proof}

\begin{remark}\label{remark:pasfib}%
  The exact sequence of \cite[Thm.~3.2.4]{perrin:geometry} implies that
  $\Pic(X)$ is a free abelian group of rank equal to the rank of $X$ (which is
  one) plus the number of $B$-stable prime divisors in $X$ that do not contain a
  $G$-orbit. Any $B$-stable prime divisor meeting $X_0$ has the form $D = \ov{B
  w_0 s_\be.[v_1+v_2]}$, where $\be$ is a simple root, and \Lemma{orbits} shows
  that $D$ contains $X_i$ if and only if $\be$ is a root of $P_i$. Let $P_{12}
  \subset G$ be the parabolic subgroup generated by $P_1$ and $P_2$. We obtain
  $\Pic(X) \cong \Z \oplus \Pic(G/P_{12})$. Let $\pi : X \to G/P_{12}$ be the
  map defined by $\pi(g.[v_1+v_2]) = \pi(g.[v_i]) = g.P_{12}$. This is a
  $G$-equivariant morphism of varieties, as its restriction to $X_0 \cup X_i$ is
  the composition of $\pi_i : X_0\cup X_i \to G/P_i$ with the projection $G/P_i
  \to G/P_{12}$. The fibers of $\pi$ are translates of $\pi^{-1}(1.P_{12}) =
  \ov{L.[v_1+v_2]} \subset \bP(V_1\oplus V_2)$, where $L$ is the Levi subgroup
  of $P_{12}$ containing $T$. Moreover, $\pi^{-1}(1.P_{12})$ is a non-singular
  projective horospherical variety of Picard rank 1, so it is either a flag
  variety or one of the non-homogeneous spaces from Pasquier's classification
  \cite{pasquier:some}.
\end{remark}

\begin{question}
  Let $X$ be any projective $G$-horospherical variety fibered over a flag
  variety $G/P$ with non-singular horospherical fibers of Picard rank 1. Is it
  true that $X$ is isomorphic to an orbit closure $\ov{G.[v_1+v_2]} \subset
  \bP(V)$, where $V$ is a rational representation of $G$, and $v_1, v_2 \in V$
  are highest weight vectors?
\end{question}

\begin{example}
  Let $X$ be the blow-up of $\bP^2$ at a point $p$, let $\pi : X \to \bP^1$ be
  the morphism defined by projection from $p$, and set $G = \SL(2,\C)$. Then $X$
  is $G$-horospherical and fibered over $\bP^1$ with fiber $\bP^1$. This variety
  $X$ is isomorphic to $\ov{G.[v_1+v_2]} \subset \bP(V_1 \oplus V_2)$, where
  $v_1$ is a highest weight vector in $V_1 = \C^2$, and $v_2$ is a highest
  weight vector in $V_2 = \Sym^2(\C^2)$.
\end{example}

% Bibliography
% A .bib file can be used by adding the line "\def\mybibfile{file.bib}" to the
% file equivrigid.conf. Do not add equivrigid.conf to the repository.
\ifdefined\mybibfile
\bibliography{\mybibfile}
\else

\fi
\bibliographystyle{halpha}

\end{document}